\documentclass[a4paper,12pt]{amsart}

\usepackage{amsfonts}
\usepackage{amsmath}
\usepackage{amssymb}

\usepackage{mathrsfs}
\usepackage[colorlinks]{hyperref}

\setlength{\textwidth}{15.2cm}
\setlength{\textheight}{22.7cm}
\setlength{\topmargin}{0mm}
\setlength{\oddsidemargin}{3mm}
\setlength{\evensidemargin}{3mm}
\setlength{\footskip}{1cm}


\newcommand{\M}{\mathcal{M}}

\newcommand{\C}{\mathbb{C}}
\newcommand{\N}{\mathbb{N}}

\renewcommand{\bar}{\overline}
\renewcommand{\hat}{\widehat}

\def\calL{{\mathcal L}}

\numberwithin{equation}{section}

\newtheorem{thm}{Theorem}[section]

\newtheorem{lem}[thm]{Lemma}
\newtheorem{prop}[thm]{Proposition}

\theoremstyle{remark}
\newtheorem{rem}[thm]{Remark}
\newtheorem{defn}{Definition}[section]
\newtheorem{ex}[thm]{Example}

\setlength{\marginparwidth}{2cm}

\newcommand{\Del}[1]{}

\newcommand{\HS}{\mathtt{HS}}

\def\calH{{\mathcal H}}

\def\N{\mathbb{N}}

\begin{document}

\title[Sequence spaces of eigenfunction expansions on compact manifolds]
{Eigenfunction expansions of ultradifferentiable functions and ultradistributions. II. Tensor representations.}

\author[Aparajita Dasgupta]{Aparajita Dasgupta}
\address{
  Aparajita Dasgupta:
  \endgraf
  \'Ecole polytechnique f\'ed\'erale de Lausanne
  \endgraf
  Facult\'e des Sciences
  \endgraf
  CH-1015 Lausanne 
  \endgraf
  Switzerland
  \endgraf
  {\it E-mail address} {\rm aparajita.dasgupta@epfl.ch}
  }

\author[Michael Ruzhansky]{Michael Ruzhansky}
\address{
  Michael Ruzhansky:
  \endgraf
  Department of Mathematics
  \endgraf
  Imperial College London
  \endgraf
  180 Queen's Gate, London SW7 2AZ 
  \endgraf
  United Kingdom
  \endgraf
  {\it E-mail address} {\rm m.ruzhansky@imperial.ac.uk}
  }

\thanks{The second
 author was supported by the EPSRC Grants EP/K039407/1 and 
EP/R003025/1, and by 
 the Leverhulme Research Grant RPG-2017-151. No new data was collected or generated during the course of this research.}
\date{\today}

\subjclass{Primary 46F05; Secondary 58J40, 22E30}
\keywords{Gevrey spaces; ultradistributions; Komatsu classes; compact manifolds.}

\begin{abstract}
In this paper we analyse the structure of the spaces of coefficients of eigenfunction expansions of functions in Komatsu classes on compact manifolds, continuing the research in our paper \cite{DaR2}. We prove that such spaces of  Fourier coefficients are perfect sequence spaces. As a consequence we describe the tensor structure of sequential mappings on spaces of Fourier coefficients and characterise their adjoint mappings. In particular, the considered classes include spaces of analytic and Gevrey functions, as well as spaces of ultradistributions, yielding tensor representations for linear mappings between these spaces on compact manifolds.
\end{abstract}

\maketitle

\tableofcontents

\section{Introduction}

The present paper is a continuation of our paper \cite{DaR2} where we have characterised Komatsu spaces of ultradifferentiable functions and ultradistributions  on compact manifolds in terms of the eigenfunction expansions related to positive elliptic operators. In particular, these classes include the spaces of analytic, Gevrey and smooth functions as well as the corresponding dual spaces of distributions and ultradistributions, in both Roumieu and Beurling settings. 

In particular, this extended the earlier characterisations of analytic functions on compact manifolds in terms of the eigenfunction expansions by Seeley \cite{see:exp} (see also \cite{see:ex}), and characterisations of Gevrey spaces and ultradistributions on tori \cite{Tag1} and on compact Lie groups and homogeneous spaces \cite{DaR1}.

For example, if $E$ is a positive elliptic pseudo-differential operator on a compact manifold $X$ without boundary and $\lambda_j$ denotes its eigenvalues in the ascending order, then smooth functions on $X$ can be characterised in terms of their Fourier 
coefficients:
\begin{equation}\label{EQ:smooth}
f\in C^{\infty}(X) \; \Longleftrightarrow \;
\forall N\; \exists C_{N}: 
|\widehat{f}(j,k)|\leq C_{N} \lambda_{j}^{-N} 
\textrm{ for all } j\geq 1, 1\leq k\leq d_{j},
\end{equation}
where $\hat{f}(j,k)= \left(f, e^{k}_j\right)_{L^2}$ with $e_j^k$ being the $k^{th}$ eigenfunction corresponding to the eigenvalue $\lambda_j$ (of multiplicity $d_j$), see \eqref{EQ:FC}.
If $X$ and $E$ are analytic, the result of Seeley \cite{see:exp} can be reformulated 
as
\begin{equation}\label{EQ:analytic}
f \textrm{ is analytic } \Longleftrightarrow 
 \exists L>0\; \exists C: 
|\widehat{f}(j,k)|\leq C e^{-L\lambda_j^{1/\nu}} 
\textrm{ for all } j\geq 1, 1\leq k\leq d_{j},
\end{equation}
where $\nu$ is the order of the pseudo-differential operator $E$.
In \cite{DaR2} we extended such characterisations to Gevrey classes and, more generally, to Komatsu classes of ultradifferentiable functions and the corresponding classes of ultradistributions.

In this paper we continue this analysis showing that the appearing spaces of coefficients with respect to expansions in eigenfunctions of positive elliptic operators are perfect spaces in the sense of the theory of sequence spaces (see e.g. K{\"o}the \cite{Kothe:BK-top-vector-spaces-I}).
Consequently, we obtain tensor representations for linear mappings between spaces of ultradifferentiable functions and the corresponding spaces of ultradistributions. Such discrete representations in a given basis are useful in different areas of time-frequency analysis, in partial differential equations, and in numerical investigations. Due to possible multiplicities of eigenvalues the mappings beget the tensor structure rather than the matrix one as it would be in the case of simple eigenvalues, and our results are new for both situations.

Our analysis is based on the global Fourier analysis on compact manifolds which was consistently developed in \cite{DR}, with a number of subsequent applications, for example to the spectral properties of operators \cite{Delgado-Ruzhansky:JFA-2014}, or to the wave equations for the Landau Hamiltonian \cite{RT:LMP}. The corresponding version of the Fourier analysis based on expansions with respect to biorthogonal systems of eigenfunctions of non-self-adjoint operators has been developed in \cite{RT:IMRN}, with a subsequent extension in \cite{RT:MMNP}.

The obtained characterisations of Komatsu classes found their applications, for example for the well-posedness problems for weakly hyperbolic partial differential equations \cite{Garetto-Ruzhansky:wave-eq}. The spaces of coefficients of eigenfunction expansions in ${\mathbb R}^n$ with respect to the eigenfunctions of the harmonic oscillator have been analysed in \cite{GPR} , and the corresponding Komatsu classes have been investigated in \cite{VV}. The original Komatsu spaces of ultradifferentiable functions and ultradistributions have appeared in the works \cite{KO1, KO2, KO3} by Komatsu (see also Rudin \cite{Rudin:bk-RandCanalysis-1974}), extending the original works by Roumieu \cite{Roumieu:1962}. The universality of the spaces of Gevrey functions on the torus has been established in \cite{Tag2}.

The regularity properties of spaces of distributions and ultradistributions have been analysed in \cite{Pilipovic-Scarpalezos:PAMS-2001}, and their convolution properties appeared in \cite{Pilipovic-Prangoski:Roumieu-MM-2014}.

The characterisations in terms of the eigenfunction expansions provide for descriptions alternative to those using the classical Fourier analysis, with applications in the theory of partial differential equations, see e.g. \cite{Rodino:bk-Gevrey}.
For some other applications of this type of analysis one can see e.g. \cite{Carmichael-Kamiski-Pilipovic:BK,Delcroix-Hasler-Pilipovic:periodic}.

\smallskip
The paper in organised as follows. In Section \ref{SEC:Fourier} we briefly recall the constructions leading to the global Fourier analysis on compact manifolds. In Section \ref{SEC:seqspaces} we very briefly recall the relevant definitions from the theory of sequence spaces.
In Section \ref{SEC:Komatsu}  we present the main results of this paper and their proofs.
In Section \ref{SEC:Beurling} we first recall the definitions for Beurling version of the spaces and then give the statement of the corresponding adjointness Theorem \ref{THM:adj} in this  case.

\smallskip
In this paper we adopt the notation $\N_0=\N\cup\{0\}$.

\section{Fourier analysis on compact manifolds}
\label{SEC:Fourier}

Let $X$ be a closed  $C^{\infty}$-manifold of dimension $n$  endowed with a fixed measure $dx.$ We first recall an abstract statement from \cite[Theorem 2.1]{DR} giving rise to the Fourier analysis on $L^2(X)$.

\begin{thm}\label{THM:DR-inv}
Let $\calH$ be a complex Hilbert space and let $\mathcal{H^{\infty}}\subset \mathcal H$ be a dense linear subspace of $\mathcal H.$ Let $\{d_j\}_{j\in\mathbb{N}_0}\subset \mathbb{N}$ and let $\{e^{k}_{j}\}_{{j\in\mathbb{N}_{0}, 1\leq k\leq d_j}}$ be an orthonormal basis of $\mathcal H$  such that $e^{k}_j\in\mathcal{H}^{\infty}$ for all $j$ and $k$. Let $H_j:={\rm span}\{e^{k}_{j}\}_{k=1}^{d_j},$ and let $P_j:\calH\rightarrow H_j$ be the orthogonal projection. For $f\in\calH,$ we denote $\hat{f}(j,k):=(f,e^{k}_j)_{\calH}$ and let $\hat f(j)\in \mathbb{C}^{d_j}$ denote the column of $\hat f(j,k),$  $1\leq k\leq d_j.$ Let $T: \calH^{\infty}\rightarrow \calH$ be a linear operator. Then the following conditions (i)-(iii) are equivalent.
\begin{enumerate}
\item For each $j\in\mathbb N_{0},$ we have $T(H_j)\subset H_j.$
\item For each $l\in\mathbb{N}_0$ there exists a matrix $\sigma(l)\in\mathbb{C}^{d_l\times d_l}$ such that for all $e^{k}_j$,
$$\hat{Te^{k}_{j}}(l,m)=\sigma(l)_{mk}\delta_{jl}.$$
\item If in addition all $e_{j}^{k}$ are in the domain of $T^{*}$, then for each $l\in\mathbb{N}_0$ there exists a matrix $\sigma(l)\in\mathbb{C}^{d_l\times d_l}$ such that for all $f\in{\calH}^{\infty} $ we have
$$\hat{Tf}(l)=\sigma(l)\hat{f}(l).$$
The matrices in (ii) and (iii) coincide.

The equivalent properties (i)--(iii) follow from the condition:
\item For each $j\in\mathbb{N}_0,$ we have $TP_j=P_jT$ on $\calH^{\infty.}$

If, in addition, $T$ extends to a bounded operator $T\in\calL(H)$ then (iv) is equivalent to (i)--(iii).
\end{enumerate}
\end{thm}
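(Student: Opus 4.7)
The plan is to prove the cycle (i) $\Rightarrow$ (ii) $\Rightarrow$ (i), then (iii) $\Leftrightarrow$ (ii), and finally to handle (iv) separately. I would begin with (i) $\Rightarrow$ (ii): if $T(H_j)\subset H_j$, then $Te_j^k\in H_j$, so its Fourier expansion $Te_j^k=\sum_m (Te_j^k,e_j^m)_{\mathcal{H}}\, e_j^m$ is supported only on the $j$-th block. Setting $\sigma(j)_{mk}:=(Te_j^k,e_j^m)_{\mathcal{H}}$, orthogonality of distinct blocks yields $\widehat{Te_j^k}(l,m)=\sigma(l)_{mk}\delta_{jl}$ at once. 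The reverse (ii) $\Rightarrow$ (i) is Parseval: the block-diagonal form of $\widehat{Te_j^k}$ forces $Te_j^k=\sum_m\sigma(j)_{mk}e_j^m\in H_j$, and since $\{e_j^k\}_k$ spans $H_j$ this gives $T(H_j)\subset H_j$.

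For (iii) $\Rightarrow$ (ii), I would specialise to $f=e_j^k$; then $\widehat{e_j^k}(l)$ is $\delta_{jl}$ times the $k$-th standard column vector in $\mathbb{C}^{d_l}$, and the identity $\widehat{Tf}(l)=\sigma(l)\widehat{f}(l)$ collapses to exactly the formula in (ii). The main obstacle is the converse (ii) $\Rightarrow$ (iii): one cannot simply expand $Tf=T\sum_{j,k}\widehat{f}(j,k)e_j^k$ termwise, because $T$ is not assumed continuous on $\mathcal{H}^{\infty}$. This is where the hypothesis $e_l^m\in\operatorname{dom}(T^*)$ is indispensable. For $f\in\mathcal{H}^{\infty}$, one writes
\[
\widehat{Tf}(l,m)=(Tf,e_l^m)_{\mathcal{H}}=(f,T^*e_l^m)_{\mathcal{H}},
\]
and then expands $T^*e_l^m\in\mathcal{H}$ in the orthonormal basis. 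Its coefficients are $(T^*e_l^m,e_j^k)_{\mathcal{H}}=\overline{(e_l^m,Te_j^k)_{\mathcal{H}}}=\overline{\widehat{Te_j^k}(l,m)}=\overline{\sigma(l)_{mk}}\,\delta_{jl}$ by (ii), so only the $j=l$ block survives and $T^*e_l^m=\sum_k\overline{\sigma(l)_{mk}}\,e_l^k$. Substituting back,
\[
\widehat{Tf}(l,m)=\sum_k\sigma(l)_{mk}(f,e_l^k)_{\mathcal{H}}=\sum_k\sigma(l)_{mk}\widehat{f}(l,k)=\bigl(\sigma(l)\widehat{f}(l)\bigr)_m,
\]
which is (iii). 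Note this argument sidesteps any convergence issue because it only uses Bessel/Parseval for the single vector $T^*e_l^m\in\mathcal{H}$ together with the inner-product pairing with $f$.

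Finally, (iv) $\Rightarrow$ (i) is immediate: for $v\in H_j\subset\mathcal{H}^{\infty}$ one has $P_jv=v$, so $Tv=TP_jv=P_jTv\in H_j$. For the converse in the bounded case, assume $T\in\mathcal{L}(\mathcal{H})$ and that (i) (equivalently (iii)) holds. Given $f\in\mathcal{H}$, the Parseval decomposition $f=\sum_j P_jf$ converges in $\mathcal{H}$, and boundedness of $T$ allows us to pass it through the sum: $Tf=\sum_j TP_jf$. By (i) each summand $TP_jf$ lies in $H_j$, hence applying the bounded projection $P_l$ and using orthogonality of distinct blocks leaves only the $j=l$ term, giving $P_lTf=TP_lf$, which is (iv). The delicate step of the whole argument is the adjoint manoeuvre in (ii) $\Rightarrow$ (iii); the remaining implications are essentially bookkeeping with orthonormal expansions.
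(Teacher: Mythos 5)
The paper does not prove this theorem at all --- it is quoted verbatim from \cite[Theorem 2.1]{DR} --- so there is no in-paper proof to compare against; your argument is correct and follows the natural route (and the one used in \cite{DR}), correctly isolating the adjoint manoeuvre $\widehat{Tf}(l,m)=(Tf,e_l^m)=(f,T^*e_l^m)$ as the only step that is not pure bookkeeping, precisely because $T$ need not be continuous on $\mathcal H^\infty$. The sole blemish is a pair of cancelling conjugation slips: in fact $(T^*e_l^m,e_j^k)=(e_l^m,Te_j^k)=\overline{\widehat{Te_j^k}(l,m)}$ (no bar in the first equality, a bar in the second), which still yields $\overline{\sigma(l)_{mk}}\,\delta_{jl}$, so the conclusion and everything downstream are unaffected.
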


Under the assumptions of Theorem \ref{THM:DR-inv} we have the direct sum decomposition 
$$
\calH =\oplus_{j=0}^{\infty}H_j, \quad H_j=\textrm{ span }\{e^{k}_j\}_{k=1}^{d_j},
$$ 
and we have $d_j=\dim H_j.$ Here we will consider $\calH=L^2(X)$ for a compact manifold $X$ with $H_j$ being the eigenspaces of an elliptic positive pseudo-differential operator $E.$  

The eigenvalues of $E$ (counted without multiplicities) form a sequence ${\lambda_j}$, $j\in\N$, which we order so that 
$$0=:\lambda_0<\lambda_1<\lambda_2<...$$
For each eigenvalue $\lambda_j,$ there is the corresponding finite dimensional eigenspace $H_j$ of functions on $X,$ which are smooth due to the ellipticity of $E.$ We set 
$$d_0:=\dim H_0, \quad H_0:=\ker E, ~~\lambda_0:= 0.$$
Since the operator $E$ is elliptic, it is Fredholm, hence also $d_0<\infty.$  

We denote by $\Psi^{\nu}_{+e}(X)$ the space of positive elliptic pseudo-differential operators on order $\nu>0$ on $M$.
Here we recall a useful relation between the sequences $\lambda_j$ and $d_j$ of eigenvalues of $E\in \Psi^{\nu}_{+e}(X)$ and their multiplicities from \cite{DR}.

\begin{prop}\label{PROP:dlambdas}
Let $X$ be a closed manifold of dimension $n$, and let $E\in \Psi^{\nu}_{+e}(X)$, with $\nu>0.$ Then there exists a constant $C>0$ such that we have
$$d_j\leq C(1+\lambda_j)^{\frac{n}{\nu}}$$ for all $j\geq 1.$ Moreover, we also have 
$$\sum^{\infty}_{j=1}d_j(1+\lambda_j)^{-q}<\infty \;\textrm{ if and only if } \quad  q>\frac{n}{\nu}.$$ 
\end{prop}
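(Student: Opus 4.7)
My plan is to reduce both claims to the Weyl asymptotic for the counting function
$$N(\lambda) := \sum_{\lambda_j \leq \lambda} d_j,$$
which counts eigenvalues with multiplicity. For a positive elliptic pseudo-differential operator $E$ of order $\nu$ on a closed $n$-manifold, H\"ormander's classical Weyl law gives
$$c\,\lambda^{n/\nu} \;\leq\; N(\lambda) \;\leq\; C\,\lambda^{n/\nu}$$
for all sufficiently large $\lambda$, the upper bound extending to all $\lambda \geq 0$ after enlarging the constant. I would take this two-sided bound as the essential input, citing either H\"ormander's heat-trace / Tauberian proof or the version already recorded in \cite{DR}.

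The first inequality then follows immediately: the distinct values $\lambda_j$ are strictly ordered, so the multiplicity $d_j$ of the single eigenvalue $\lambda_j$ cannot exceed $N(\lambda_j)$, and the Weyl upper bound yields $d_j \leq C(1+\lambda_j)^{n/\nu}$.

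For the sharp summability statement I would pass to a Riemann--Stieltjes integral and integrate by parts,
$$\sum_{j=1}^\infty d_j(1+\lambda_j)^{-q} \;=\; \int_{0^-}^{\infty}(1+\lambda)^{-q}\,dN(\lambda) \;=\; q\int_0^\infty N(\lambda)(1+\lambda)^{-q-1}\,d\lambda,$$
the boundary terms vanishing for $q > 0$. The two-sided Weyl bound makes this integral comparable to $\int_1^\infty \lambda^{n/\nu - q - 1}\,d\lambda$, which is finite precisely when $q > n/\nu$, delivering both directions simultaneously. An equivalent and very concrete alternative is a dyadic decomposition: grouping indices into $A_k := \{j : 2^k \leq 1+\lambda_j < 2^{k+1}\}$, one has $\sum_{j \in A_k} d_j \leq N(2^{k+1}) \lesssim 2^{kn/\nu}$, reducing the series to the geometric sum $\sum_k 2^{k(n/\nu - q)}$; the matching lower Weyl bound supplies the converse. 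The only genuinely non-elementary ingredient is Weyl's law itself, so the main obstacle is effectively deferred to this well-known spectral-asymptotics fact; everything else is routine bookkeeping with counting functions.
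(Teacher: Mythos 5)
Your argument is correct and is the standard one: the paper itself gives no proof of this proposition, recalling it from \cite{DR}, and the cited source likewise deduces it from H\"ormander's Weyl law for the counting function $N(\lambda)\asymp \lambda^{n/\nu}$, exactly as you do. The only cosmetic point is that the boundary term $N(\Lambda)(1+\Lambda)^{-q}$ in your integration by parts vanishes only for $q>n/\nu$ rather than for all $q>0$; since it is nonnegative, the partial-sum lower bound needed for the divergence direction survives anyway, and your dyadic alternative sidesteps the issue entirely.
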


For $f\in L^{2}(X),$ by definition we have the Fourier series decomposition
$$f=\sum_{j=0}^{\infty}\sum_{k=1}^{d_j}\hat{f}(j,k)e^{k}_{j}.$$  
The Fourier coefficients of $f\in L^2(X)$ with respect to the orthonormal basis $\{e^{k}_j\}$ are denoted by
\begin{equation}\label{EQ:FC}
\mathcal{F}f(j,k)=\hat{f}(j,k):= \left(f, e^{k}_j\right)_{L^2}.
\end{equation} 
We denote the space of Fourier coefficients by
\begin{equation}\label{EQ:sigma}
 \Sigma=\{v=(v_l)_{l\in\mathbb{N}_{0}},~ v_{l}\in\mathbb{C}^{d_l}\}.
\end{equation} 
Since $\{e^{k}_j\}_{j\geq 0}^{1\leq k\leq d_j}$ is a complete orthonormal system of  $L^{2}(X)$ we have the Plancherel formula 
  $$||f||^{2}_{L^2(X)}=\left(\sum_{j=0}^{\infty}\sum_{k=1}^{d_j}|\hat{f}(j,k)|^{2}\right)^{1/2}=||\hat f||^{2}_{l^{2}(\mathbb N_{0},\Sigma)}=:\sum_{j=0}^{\infty}||\hat{f}(j)||^{2}_{\mathtt{HS}},$$
  where we interpret $\hat f$ as an element of the space 
  $$l^{2}(\mathbb N_{0},\Sigma)=\left\{h: \mathbb N_{0}\rightarrow \prod_{j}\mathbb{C}^{d_j}: h(j)\in \mathbb{C}^{d_j} , \sum_{j=0}^{\infty}\sum_{k=1}^{d_j}|{h}(j,k)|^{2}<\infty \right\}.$$
 We endow $l^{2}(\mathbb N_{0},\Sigma)$ with the norm 
 $$||h||_{l^{2}(\mathbb N_{0},\Sigma)}=
 \left(\sum_{j=0}^{\infty}\sum_{k=1}^{d_j}|{h}(j,k)|^{2}\right)^{1/2}.$$\\
We can think of $\mathcal{F}=\mathcal{F}_{X}$ as of the Fourier transform which is an isometry form $L^{2}(X)$ onto $l^{2}(\mathbb N_{0},\Sigma).$ The inverse of this Fourier transform can be then expressed by
$$(\mathcal{F}^{-1}h)(x)=\sum_{j=0}^{\infty}\sum_{k=1}^{d_j}h(j,k)e^{k}_{j}(x).$$
If $f\in L^{2}(X)$ we can write
\begin{equation} 
\hat{f}(j)= \begin{pmatrix}
    \hat{f}(j,1)\\
    \vdots\\
    \vdots\\
    \vdots\\
    \vdots\\
    \hat{f}(j,d_j)
  \end{pmatrix} \in \mathbb C^{d_j},
  \end{equation}
  thus thinking of the Fourier transforn always as a column vector.  In particular,  we think  of 
$$\hat{e^{k}_{j}}(l)=\left(\hat{e^{k}_{j}}(l,m)\right)_{m=1}^{d_l}$$ as of a column, and we notice that 
$$\hat{e^{k}_{j}}(l,m)=\delta_{jl}\delta_{km}.$$

  \section{Sequence spaces and sequential linear mappings}
  \label{SEC:seqspaces}
  
  We briefly recall that a sequence space $E$ is a linear subspace of 
$$\mathbb{C}^{\mathbb Z}=\{a=(a_j)|a_j\in\mathbb{C}, j\in \mathbb{Z}\}.$$
The dual $\hat{E}$ ($\alpha$-dual in the terminology of G. Kothe \cite{Kothe:BK-top-vector-spaces-I}) is a sequence  space defined by
$$\hat{E}=\{a\in \mathbb{C}^{\mathbb Z}: \sum_{j\in \mathbb{Z}} |u_j|||a_j|<\infty
\textrm{ for all }u\in E\}.$$

A sequence space $E$ is called {\em perfect} if $\hat{\hat{E}}=E$.
A sequence space is called {\em normal} if $u=(u_j)\in E$ implies $|u|=(|u_j|)\in E.$
A dual space $\hat{E}$ is normal so that any perfect space is normal.

A pairing ${\langle\cdot,\cdot\rangle}_{E}$ on $E$ is a bilinear function on $E\times\hat{E}$ defined by 
$$\langle u,v\rangle_{E}=\sum_{j\in \mathbb{Z}}{u_jv_j}\in\mathbb{C},$$ 
which converges absolutely by the definition of $\hat{E}.$

\begin{defn} $\phi: E\rightarrow \mathbb{C}$ is called a {\em sequential linear functional} if there exists some $a\in\hat{E}$ such that $\phi(u)=\langle u,a\rangle_E$ for all $u\in E.$ We abuse the notation by also writing $a: E\rightarrow \mathbb{C}$ for this mapping.\end{defn}

\begin{defn} A mapping $f:E\rightarrow F$ between two sequence spaces is called a {\em sequential linear mapping} if 
\begin{enumerate}
\item $f$ is algebraically linear,
\item for any $v\in \hat F,$ the composed mapping $v\circ f\in\hat{E}.$ 
\end{enumerate}
\end{defn}
  
\section{Tensor representations for Komatsu classes and their $\alpha$-duals}
\label{SEC:Komatsu}

Let $M_{k}$ be a sequence of positive numbers such that 

\medskip
\noindent
(M.0) $M_0=1$,\\
(M.1) (Stability) $M_{k+1}\leq AH^{k}M_k, $  $k=0,1,2,\ldots,$\\
(M.2) $M_{2k}\leq AH^{k}\min_{0\leq q\leq k} M_qM_{k-q},$  $k=0,1,2,...,$ for some $A, H>0$.

\medskip
In a sequence of papers \cite{KO1,KO2,KO3} Komatsu investigated classes of ultradifferentiable functions on ${\mathbb{R}}^{n}$ associated to the sequence ${M_k}$, namely, the spaces of functions $\Psi\in C^{\infty}(\mathbb{R}^{n})$ such that  for every compact 
$K\subset\mathbb{R}^{n}$ there exist $h>0$ and a constant $C>0$ such that
\begin{equation}\sup_{x\in K}|\partial^{\alpha}\psi(x)|\leq Ch^{|\alpha|}M_{|\alpha|}
\end{equation}
holds for all multi-indices $\alpha\geq 0$.
Similar to the case of usual distributions given a space of ultradifferentiable functions satisfying (4.1) we can define a space of ultradistributions as its dual.
 
We now recall the analogous definition of the Komatsu ultradifferentiable functions $\Gamma_{\{\M_k\}}(X)$ and its $\alpha$-dual $\left[\Gamma_{\{\M_k\}}(X)\right]^{\wedge}$.
Here, as before, $X$ is a compact manifold without boundary and 
$E\in\Psi_{+e}^{\nu}(X)$ with $\nu>0$.

\begin{defn} The class $\Gamma_{\{M_k\}}(X)$ is the space of $C^{\infty}$ functions $\phi$ on $X$ such that there exist $h>0$ and $C>0$ such that we have
$$||E^{k}\phi||_{L^2(X)}\leq Ch^{\nu k}M_{\nu k},\quad k=0,1,2,\ldots,$$ 
where $\nu\in\mathbb{N}$ is the order of the positive elliptic pseudo-differential operator $E.$\end{defn}
  In \cite{DaR2} we have characterised the class $\Gamma_{\{M_k\}}(X)$ in terms of the eigenvalues of the operator $E$. 
 We assume that 
 
 \medskip
 \noindent
 (M.3) \quad For some $l,C_{l}>0$ we have $k!\leq C_{l} l^{k}M_{k}$, for all $k\in\mathbb{N}_{0}.$
 
\medskip

In the sequel,  for $w_l\in{\mathbb{C}}^{d_l}$ we write 
$$||w_l||_{\mathtt{HS}}:=\left(\sum_{j=1}^{d_l}\left|(w_l)_j\right|^{2}\right)^{1/2}.$$
  
\begin{thm}[{\cite{DaR2}}]\label{THM:gamma}
Assume conditions (M.0), (M.1), (M.2), (M.3). Then $\phi\in\Gamma_{\{M_k\}}(X)$ if and only if there exist constants $C>0$ and $L>0$ such that
$$||\hat{\phi}(l)||_{\mathtt{HS}}\leq C\exp\{-M(L\lambda_{l}^{1/\nu})\}, \quad \textrm{ for all } l\geq 1,$$
where $$M(r):=\sup_{k\in\N_0}\log\frac{r^{\nu k}}{M_{\nu k}}.$$
\end{thm}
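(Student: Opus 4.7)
The plan is to prove both directions by combining Plancherel's identity for the eigenfunction expansion with the dual description of the associated function, $\exp\{-M(r)\} = \inf_{k\in\N_0} M_{\nu k}/r^{\nu k}$. Throughout I use that $E e^k_l = \lambda_l e^k_l$, so in Fourier variables $E$ acts as multiplication by $\lambda_l$ on the $l$-th block, and therefore
\[
\|E^k\phi\|_{L^2}^2 = \sum_{l\geq 0}\lambda_l^{2k}\,\|\hat\phi(l)\|_{\HS}^2.
\]

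For the direction $\phi \in \Gamma_{\{M_k\}}(X) \Rightarrow$ Fourier decay, assume $\|E^k\phi\|_{L^2}\leq C h^{\nu k} M_{\nu k}$ for all $k\in\N_0$. Isolating a single term in the Plancherel identity gives, for every $l\geq 1$ and every $k$,
\[
\lambda_l^{2k}\,\|\hat\phi(l)\|_{\HS}^2 \leq C^2 h^{2\nu k}M_{\nu k}^2,
\]
that is, $\|\hat\phi(l)\|_{\HS}\leq C\,M_{\nu k}/(L\lambda_l^{1/\nu})^{\nu k}$ with $L:=1/h$. Taking the infimum in $k$ yields exactly $\|\hat\phi(l)\|_{\HS}\leq C\exp\{-M(L\lambda_l^{1/\nu})\}$.

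For the reverse direction, assume $\|\hat\phi(l)\|_{\HS}\leq C\exp\{-M(L\lambda_l^{1/\nu})\}$. For any $N\in\N_0$ to be chosen, bounding the infimum defining $e^{-M}$ by its value at index $k+N$ gives
\[
e^{-M(L\lambda_l^{1/\nu})}\leq \frac{M_{\nu(k+N)}}{(L\lambda_l^{1/\nu})^{\nu(k+N)}}.
\]
Here I invoke the standard Komatsu consequence of (M.1) and (M.2), namely the submultiplicative splitting $M_{p+q}\leq A_0 H_0^{\,p+q} M_p M_q$, applied with $p=\nu k$ and $q=\nu N$. This isolates $M_{\nu k}$ as the only factor depending on $k$:
\[
\lambda_l^{2k}\,\|\hat\phi(l)\|_{\HS}^2 \leq C^2 A_0^{2}\Bigl(\frac{H_0}{L}\Bigr)^{2\nu k}M_{\nu k}^2\;\cdot\;\frac{(H_0/L)^{2\nu N}M_{\nu N}^2}{\lambda_l^{2N}}.
\]
I then fix $N$ large enough that $2N>n/\nu$; Proposition \ref{PROP:dlambdas} gives $\sum_{l\geq 1}\lambda_l^{-2N}\leq \sum_{l\geq 1}d_l(1+\lambda_l)^{-2N}<\infty$, and summing over $l$ via Plancherel produces a bound of the form $\|E^k\phi\|_{L^2}\leq C'(H_0/L)^{\nu k}M_{\nu k}$, i.e.\ $\phi\in\Gamma_{\{M_k\}}(X)$ with parameter $h'=H_0/L$.

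The main obstacle is the submultiplicative estimate $M_{p+q}\leq A_0 H_0^{\,p+q}M_p M_q$: this is not (M.2) itself, which only controls indices of the form $2k$, but follows from it by combining with the single-step stability (M.1) to bridge from doubled indices to arbitrary sums. The ingredient (M.3) enters only tacitly, ensuring that the sup defining $M(r)$ is finite for every $r$. The remaining bookkeeping — choosing $N$ so that the sum $\sum_l \lambda_l^{-2N}$ converges and absorbing the $N$-dependent prefactors into the constant $C'$ — is routine.
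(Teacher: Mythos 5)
This statement is quoted from \cite{DaR2} and the present paper contains no proof of it, but your Plancherel-based argument --- isolating a single term of $\|E^k\phi\|_{L^2}^2=\sum_l\lambda_l^{2k}\|\hat\phi(l)\|_{\HS}^2$ and taking the infimum over $k$ for the forward direction, shifting the index by $N$ with $2N>n/\nu$ and invoking Proposition \ref{PROP:dlambdas} for the converse --- is correct and is essentially the proof given in \cite{DaR2}. One remark: since $N$ is fixed once and for all, the only instance of submultiplicativity you need, $M_{\nu(k+N)}\le C_N\,(H^{\nu N})^{\nu k}M_{\nu k}$, already follows by iterating (M.1) $\nu N$ times, so the ``main obstacle'' of deriving the full estimate $M_{p+q}\le A_0H_0^{p+q}M_pM_q$ from (M.1)--(M.2) can be bypassed entirely (which is just as well, given that (M.2) as printed here is clearly garbled).
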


\begin{ex}
As an example, for the (Gevrey-Roumieu) class of ultradifferentiable functions
$$\gamma^{s}(X)=\Gamma_{\{(k!)^{s}\}}(X),\quad 1< s<\infty,$$ 
we have $M(r)\simeq r^{1/s}$. This is also true for $s=1$, characterising the class of analytic functions if the manifold is analytic. The class $\gamma^{s}(X)$ coincides with the usual Gevrey class of functions on a manifold $X$ defined in terms of their localisations.
\end{ex}

Based on Theorem \ref{THM:gamma} we can  then write
 \begin{multline*}
 \Gamma_{\{\M_k\}}(X)=\left\{[\hat{\phi}(l)]_{l\in{\mathbb{N}}_{0}}:  \phi\in C^{\infty}(X),\; \exists C>0  \textrm{ such that } \right.  \\ \left.
  ||\hat{\phi}(l)||_{\mathtt{HS}}\leq C\exp\{-M(L\lambda_l^{1/\nu})\} , \forall l \geq 1 \right\}.
 \end{multline*}
For  $\phi\in  \Gamma_{\{\M_k\}}(X)$ we will write $\phi\approx \left[\hat{\phi}(l)\right]_{l\in{\mathbb{N}}_{0}}$  so that $\Gamma_{\{\M_k\}}(X)$ can be thought of as a sequence space, but it will be convenient to view it as a subspace of $\Sigma$ defined in \eqref{EQ:sigma}, taking into account the dimensions of the eigenspaces of the operator $E$.

Next we recall the definition of the $\alpha$-dual of the space $\Gamma_{\{\M_k\}}(X)$  (following \cite{DaR2}).\\

The $\alpha$-dual of the space $\Gamma_{\{\M_k\}}(X)$ of ultradifferentiable functions, denoted by $[\Gamma_{\{\M_k\}}(X)]^{\wedge},$ is given by
$$\left\{v=(v_l)_{l\in\mathbb{N}_0}\in \Sigma, v_l\in{\mathbb C}^{d_l}: \sum_{l=0}^{\infty}\sum_{j=1}^{d_l}|(v_l)_j||\hat{\phi}(l,j)|<\infty, \textrm{ for all } \phi\in \Gamma_{\{\M_k\}}(X)  \right\}.$$
 We also recall the following characterisations of the $\alpha$-duals established in \cite{DaR2}.
 
\begin{thm}\label{THM:gammahat}
Assume conditions (M.0), (M.1), (M.2), (M.3). The following statements are equivalent
\begin{enumerate}
\item $v\in [\Gamma_{\{\M_k\}}(X)]^{\wedge}$;
\item for every $L>0$ we have 
$$\sum_{l=0}^{\infty}\exp\left(-M(L\lambda_l^{1/\nu})\right)||v_l||_{\mathtt{HS}}<\infty;$$
\item for every $L>0$ there exists $K_L>0$ such that
$$||v_l||_{\HS}\leq K_{L}\exp\left(M(L\lambda_l^{1/\nu})\right)$$ holds for all $l\in\mathbb{N}_0.$
\end{enumerate}
\end{thm}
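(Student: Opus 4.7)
My plan is to establish the cycle (ii) $\Rightarrow$ (iii) (trivial), (ii) $\Leftrightarrow$ (i) (Cauchy--Schwarz plus a saturating test function via Theorem \ref{THM:gamma}), and (iii) $\Rightarrow$ (ii) (the main step, using a self-similarity estimate for $M$ derived from condition (M.2) together with Proposition \ref{PROP:dlambdas}). The implication (ii) $\Rightarrow$ (iii) is immediate: each nonnegative summand is bounded by the total sum, yielding (iii) with $K_L$ equal to the value of the series.

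For (i) $\Leftrightarrow$ (ii), the direction (ii) $\Rightarrow$ (i) follows by applying Theorem \ref{THM:gamma} to any $\phi \in \Gamma_{\{\M_k\}}(X)$ to obtain $C, L > 0$ with $\|\hat\phi(l)\|_{\HS} \leq C e^{-M(L\lambda_l^{1/\nu})}$, and estimating by Cauchy--Schwarz in the multiplicity index $j$:
\begin{equation*}
\sum_{l,j} |(v_l)_j|\,|\hat\phi(l,j)| \leq \sum_l \|v_l\|_{\HS}\,\|\hat\phi(l)\|_{\HS} \leq C \sum_l \|v_l\|_{\HS}\, e^{-M(L\lambda_l^{1/\nu})},
\end{equation*}
which is finite by (ii). For (i) $\Rightarrow$ (ii), given $L > 0$ I construct a saturating test function $\phi_L$ by prescribing its Fourier coefficients as
\begin{equation*}
\widehat{\phi_L}(l,j) := e^{-M(L\lambda_l^{1/\nu})}\, \frac{\overline{(v_l)_j}}{\|v_l\|_{\HS}}
\end{equation*}
(and $0$ when $v_l = 0$). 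Then $\|\widehat{\phi_L}(l)\|_{\HS} = e^{-M(L\lambda_l^{1/\nu})}$, so $\phi_L \in \Gamma_{\{\M_k\}}(X)$ by Theorem \ref{THM:gamma}; and the absolute pairing of $v$ against $\phi_L$ equals exactly $\sum_l e^{-M(L\lambda_l^{1/\nu})} \|v_l\|_{\HS}$, which (i) forces to be finite.

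The main technical step is (iii) $\Rightarrow$ (ii). Condition (M.2), together with the definition $e^{M(r)} = \sup_k r^{\nu k}/M_{\nu k}$, yields a self-similarity inequality of the form
\begin{equation*}
2\, M(r) \leq M(Hr) + \log A \qquad \text{for all } r > 0,
\end{equation*}
where $A, H > 0$ are the constants from (M.2). Given $L > 0$, I set $L' := L/H$; applying (iii) at $L'$ and combining with this inequality (which gives $M(L\rho) - M(L'\rho) \geq M(L'\rho) - \log A$ at $\rho = \lambda_l^{1/\nu}$), one obtains
\begin{equation*}
\sum_l e^{-M(L\lambda_l^{1/\nu})}\, \|v_l\|_{\HS} \leq K_{L'} \sum_l e^{M(L'\lambda_l^{1/\nu}) - M(L\lambda_l^{1/\nu})} \leq A K_{L'} \sum_l e^{-M(L'\lambda_l^{1/\nu})}.
\end{equation*}
To control the last series I plan to use the elementary pointwise bound $e^{-M(L'r)} \leq M_{\nu k_0}\, (L'r)^{-\nu k_0}$ (valid for every $k_0 \in \N_0$ directly from the definition of $M$), choose $k_0 > n/\nu$, and invoke Proposition \ref{PROP:dlambdas}: since $d_l \geq 1$, $\sum_{l\geq 1} \lambda_l^{-k_0} \leq \sum_{l\geq 1} d_l\, \lambda_l^{-k_0} < \infty$. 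The $l = 0$ term is harmless since $\lambda_0 = 0$ gives $M(0) = 0$, reducing it to $\|v_0\|_{\HS} < \infty$. I expect the main obstacle to be the careful derivation of the self-similarity inequality from (M.2); the remaining steps are routine applications of Cauchy--Schwarz, Theorem \ref{THM:gamma}, and Proposition \ref{PROP:dlambdas}.
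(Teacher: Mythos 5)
This paper does not actually prove Theorem \ref{THM:gammahat}: it is recalled from \cite{DaR2} without proof, so there is no in-paper argument to compare against. Judged on its own merits, your proposal is correct and logically complete: the cycle (i)$\Leftrightarrow$(ii), (ii)$\Rightarrow$(iii), (iii)$\Rightarrow$(ii) covers all equivalences, the Cauchy--Schwarz step and the saturating test function $\phi_L$ are exactly the right devices for (i)$\Leftrightarrow$(ii), and the (iii)$\Rightarrow$(ii) step via the self-similarity of $M$ plus Proposition \ref{PROP:dlambdas} is sound (and is essentially the mechanism this paper itself uses in Lemma \ref{L:perfect1}).

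Two points deserve care. First, the inequality $2M(r)\leq M(Hr)+\log A$ is the standard consequence of Komatsu's condition (M.2) in the form $M_{p+q}\leq AH^{p+q}M_pM_q$; with (M.2) exactly as printed in this paper ($M_{2k}\leq AH^k\min_q M_qM_{k-q}$, note the $M_{2k}$ on the left) the same computation with $e^{2M(r)}=\sup_{p,q}r^{\nu(p+q)}/(M_{\nu p}M_{\nu q})$ produces a different dilation constant (the paper itself uses the factor $\sqrt{A}H$ in Lemma \ref{L:perfect1}). This is immaterial for your argument --- all you need is \emph{some} $R>1$ and $c\geq 0$ with $2M(r)\leq M(Rr)+c$, and then $L':=L/R$ --- but you should state the inequality with whichever constant your derivation actually yields rather than committing to $H$. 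Second, in (i)$\Rightarrow$(ii) you should add one line justifying that the prescribed coefficients $\widehat{\phi_L}(l,j)$ really define an element of $\Gamma_{\{M_k\}}(X)$, i.e.\ that the series $\sum_{l,j}\widehat{\phi_L}(l,j)e^j_l(x)$ converges to a smooth function; this follows from the decay $e^{-M(L\lambda_l^{1/\nu})}\leq M_{\nu k_0}(L\lambda_l^{1/\nu})^{-\nu k_0}$ together with the eigenfunction bound \eqref{EQ:weyllaw} and Proposition \ref{PROP:dlambdas}, exactly as in the proof of Theorem \ref{P:perfect}. With these two small repairs the proof is complete.
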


We will now show that the space $\Gamma_{\{\M_k\}}(X)$ is perfect.   In the
proof as well as in further proofs the following estimate will be useful:
\begin{equation}\label{EQ:weyllaw}
||e_l^{j}||_{L^{\infty}(X)}\leq C\lambda^{\frac{n-1}{2\nu}}_{l} \;\textrm{ for all }\;l\geq 1.
\end{equation} 
This estimate follows, for example, from the local Weyl law \cite[Theorem 5.1]{Hor}, see also \cite[Lemma 8.5]{DR}.

\begin{thm}\label{P:perfect}
Let $X$ be a compact manifold and assume conditions (M.0), (M.1), (M.2), (M.3). Then
$\Gamma_{\{\M_k\}}(X)$ is a perfect space, that is,  we have 
$$\Gamma_{\{\M_k\}}(X)=\left[\hat{\Gamma_{\{\M_k\}}(X)}\right]^{\wedge},$$ 
where 
$$\left[\hat{\Gamma_{\{\M_k\}}(X)}\right]^{\wedge}=\left\{w=(w_l)_{l\in{\mathbb{N}}_{0}}\in \Sigma: \sum^{\infty}_{l=0}\sum_{j=1}^{d_l}\left|(w_l)_j\right|\left|(v_l)_j\right|<\infty, \forall v \in \left[\Gamma_{\{\M_k\}}(X)\right]^{\wedge}\right\}.$$
\end{thm}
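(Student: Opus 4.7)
The plan is to prove the two inclusions $\Gamma_{\{\M_k\}}(X)\subseteq [\widehat{\Gamma_{\{\M_k\}}(X)}]^{\wedge}$ and $[\widehat{\Gamma_{\{\M_k\}}(X)}]^{\wedge}\subseteq \Gamma_{\{\M_k\}}(X)$ separately. The first, routine direction follows from the characterisations already at hand: for $w\in \Gamma_{\{\M_k\}}(X)$, Theorem \ref{THM:gamma} supplies constants $C,L>0$ with $\|w_l\|_{\HS}\le C\exp(-M(L\lambda_l^{1/\nu}))$, while Theorem \ref{THM:gammahat}, part (ii), applied with this same $L$, gives $\sum_{l\ge 0}\exp(-M(L\lambda_l^{1/\nu}))\|v_l\|_{\HS}<\infty$ for every $v\in [\Gamma_{\{\M_k\}}(X)]^{\wedge}$. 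Combined with the pointwise Cauchy--Schwarz bound $\sum_{j=1}^{d_l}|(w_l)_j||(v_l)_j|\le \|w_l\|_{\HS}\|v_l\|_{\HS}$, this yields absolute summability of the pairing.

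The substantive direction $[\widehat{\Gamma_{\{\M_k\}}(X)}]^{\wedge}\subseteq \Gamma_{\{\M_k\}}(X)$ I would prove by contraposition. If $w\notin \Gamma_{\{\M_k\}}(X)$, negating Theorem \ref{THM:gamma} means that for \emph{every} $L>0$,
\[
\sup_{l\ge 1}\|w_l\|_{\HS}\exp\bigl(M(L\lambda_l^{1/\nu})\bigr)=+\infty.
\]
The goal is then to construct $v\in [\Gamma_{\{\M_k\}}(X)]^{\wedge}$ whose pairing with $w$ diverges. I would fix a sequence $L_n\downarrow 0$ (say $L_n=1/n$) and, since the supremum above is not attained on any finite set of indices (as $\lambda_l\to\infty$), inductively pick strictly increasing $l_n$ satisfying
\[
\|w_{l_n}\|_{\HS}\exp\bigl(M(L_n\lambda_{l_n}^{1/\nu})\bigr)\ge 1.
\]
Then set $v_l:=0$ for $l\notin\{l_n\}_{n\ge 1}$ and
\[
(v_{l_n})_j:=c_n\,\frac{|(w_{l_n})_j|}{\|w_{l_n}\|_{\HS}},\qquad c_n:=\exp\bigl(M(L_n\lambda_{l_n}^{1/\nu})\bigr),
\]
so that $\|v_{l_n}\|_{\HS}=c_n$ and $\sum_{j=1}^{d_{l_n}}|(w_{l_n})_j||(v_{l_n})_j|=c_n\|w_{l_n}\|_{\HS}\ge 1$.

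To verify $v\in [\Gamma_{\{\M_k\}}(X)]^{\wedge}$ I would check condition (iii) of Theorem \ref{THM:gammahat}. Given $L>0$, pick $N$ with $L_n\le L$ for all $n\ge N$; since $r\mapsto M(r)$ is non-decreasing, $c_n\le \exp(M(L\lambda_{l_n}^{1/\nu}))$ for $n\ge N$, and the finitely many remaining indices are swallowed into a constant $K_L$. The divergence $\sum_l\sum_j|(w_l)_j||(v_l)_j|\ge\sum_n 1=+\infty$ is then immediate from the lower bound above, so $w\notin [\widehat{\Gamma_{\{\M_k\}}(X)}]^{\wedge}$, completing the contrapositive.

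The main obstacle is the tension between the two demands on $v$: membership in the $\alpha$-dual forces $\|v_l\|_{\HS}$ to grow no faster than $\exp(M(L\lambda_l^{1/\nu}))$ for \emph{every} $L>0$, yet for Gevrey-type weights the ratio $\exp(M(Lr))/\exp(M(r))$ is unbounded in $r$, so no single fixed $L$ works and a naive choice $c_n=\exp(M(L\lambda_{l_n}^{1/\nu}))$ with $L$ independent of $n$ cannot succeed. The remedy is the diagonalisation above: letting the growth allowance at $l_n$ be governed by $L_n\downarrow 0$ means that for any prescribed target $L>0$ one eventually has $L_n\le L$, and monotonicity of $M$ delivers the bound. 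Note that this argument only uses Theorems \ref{THM:gamma} and \ref{THM:gammahat}, not the Weyl-type estimate \eqref{EQ:weyllaw}, which presumably plays its role in the subsequent tensor-representation results rather than in the abstract perfectness statement.
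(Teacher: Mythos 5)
Your diagonalisation in the hard direction is correct, and it amounts to a self-contained, direct proof of essentially the nontrivial half of the paper's Lemma \ref{L:perfect1}: the paper obtains that statement by identifying the echelon spaces $D_L$ with $\ell^\infty$ and invoking $\widehat{\bigcap_L D_L}=\bigcup_L \widehat{D_L}$, whereas your sliding-$L_n$ construction proves the required implication by hand, which is arguably more transparent. The easy inclusion is also fine. However, there is a genuine gap at the very last step, located exactly where you assert that the Weyl-type estimate \eqref{EQ:weyllaw} is not needed. The set $\left[\widehat{\Gamma_{\{\M_k\}}(X)}\right]^{\wedge}$ consists of arbitrary elements $w\in\Sigma$, whereas $\Gamma_{\{\M_k\}}(X)$, viewed as a sequence space, is by definition the set of Fourier coefficient sequences $[\hat\phi(l)]$ of functions $\phi\in C^{\infty}(X)$ obeying the decay bound. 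Hence the negation of ``$w\in\Gamma_{\{\M_k\}}(X)$'' is not ``for every $L>0$ one has $\sup_{l}\|w_l\|_{\HS}\exp(M(L\lambda_l^{1/\nu}))=+\infty$'': a priori $w$ could satisfy the decay bound for some $L$ and still fail to be $\hat\phi$ for any smooth $\phi$. Your contrapositive therefore only shows that every $w\in\left[\widehat{\Gamma_{\{\M_k\}}(X)}\right]^{\wedge}$ satisfies $\|w_l\|_{\HS}\leq C\exp(-M(L\lambda_l^{1/\nu}))$ for some $C,L>0$; Theorem \ref{THM:gamma} cannot then be applied directly to $w$, since its hypothesis is that one already has a function.

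What is missing is the synthesis step: given such a $w$, one must set $\phi(x):=\sum_{l=0}^{\infty}\sum_{j=1}^{d_l}(w_l)_j\,e^{j}_{l}(x)$ and verify that this series defines a $C^{\infty}$ function with $\hat\phi(l)=w_l$. This is exactly where \eqref{EQ:weyllaw}, \eqref{EQ:estlambdas} and Proposition \ref{PROP:dlambdas} enter in the paper's proof of the inclusion $\left[\widehat{\Gamma_{\{\M_k\}}(X)}\right]^{\wedge}\subseteq\Gamma_{\{\M_k\}}(X)$. Once you insert this step, your argument closes, and your overall route (a direct duality/diagonalisation argument replacing the echelon-space machinery) is a legitimate alternative to the paper's.
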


To prove this we first establish the following lemma:

\begin{lem} \label{L:perfect1}
We have $w\in \left[\hat{\Gamma_{\{\M_k\}}(X)}\right]^{\wedge}$ if and only if  there exists $L>0$ such that
$$\sum_{l=0}^{\infty}\exp\left(M(L\lambda_l^{1/\nu})\right)||w_l||_{\mathtt{HS}}<\infty.$$ 
\end{lem}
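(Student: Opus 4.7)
The plan is to prove the biconditional by establishing sufficiency from Theorem \ref{THM:gammahat} via Cauchy--Schwarz, and necessity by a contrapositive diagonal argument that builds an explicit element of $[\Gamma_{\{\M_k\}}(X)]^{\wedge}$ whose pairing with $w$ diverges.

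For the forward direction, assume $\sum_l \exp(M(L\lambda_l^{1/\nu}))\|w_l\|_{\HS} < \infty$ for some fixed $L > 0$. Given $v \in [\Gamma_{\{\M_k\}}(X)]^{\wedge}$, Theorem \ref{THM:gammahat}\,(iii) applied at that same $L$ provides $K_L$ with $\|v_l\|_{\HS} \le K_L \exp(M(L\lambda_l^{1/\nu}))$. A componentwise Cauchy--Schwarz yields $\sum_{j=1}^{d_l}|(w_l)_j||(v_l)_j| \le \|w_l\|_{\HS}\|v_l\|_{\HS}$, so summing over $l$ gives the required finiteness and $w \in \left[\hat{\Gamma_{\{\M_k\}}(X)}\right]^{\wedge}$.

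For the backward direction, I argue by contradiction: suppose $w$ lies in the bidual yet $\sum_l \exp(M(L\lambda_l^{1/\nu}))\|w_l\|_{\HS} = \infty$ for every $L > 0$. Fix a sequence $L_n \downarrow 0$ (say $L_n = 1/n$). Since each such sum is infinite, I can inductively select $0 = N_0 < N_1 < N_2 < \cdots$ with
$$\sum_{l=N_{n-1}+1}^{N_n} \exp\!\bigl(M(L_n \lambda_l^{1/\nu})\bigr)\|w_l\|_{\HS} \;\ge\; 1.$$
Define $\varphi \colon \N_0 \to (0,\infty)$ by $\varphi(l) := L_n$ whenever $N_{n-1} < l \le N_n$, so $\varphi(l)\to 0$. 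Take $(v_l)_j := \exp(M(\varphi(l)\lambda_l^{1/\nu}))\,|(w_l)_j|/\|w_l\|_{\HS}$ when $w_l \neq 0$, and $v_l := 0$ otherwise; this saturates Cauchy--Schwarz, giving $\|v_l\|_{\HS} = \exp(M(\varphi(l)\lambda_l^{1/\nu}))$ and $\sum_j |(v_l)_j||(w_l)_j| = \exp(M(\varphi(l)\lambda_l^{1/\nu}))\|w_l\|_{\HS}$.

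Membership $v \in [\Gamma_{\{\M_k\}}(X)]^{\wedge}$ is checked via Theorem \ref{THM:gammahat}\,(iii): for any $L > 0$, pick $n_0$ with $L_{n_0} \le L$; for $l > N_{n_0}$ one has $\varphi(l) \le L_{n_0} \le L$, so monotonicity of $M$ yields $\|v_l\|_{\HS} \le \exp(M(L\lambda_l^{1/\nu}))$, while the finitely many indices $l \le N_{n_0}$ are absorbed into a constant $K_L$. On the other hand the pairing packages exactly as
$$\sum_{l}\sum_{j} |(v_l)_j||(w_l)_j| \;=\; \sum_{n=1}^{\infty}\sum_{l=N_{n-1}+1}^{N_n} \exp\!\bigl(M(L_n \lambda_l^{1/\nu})\bigr)\|w_l\|_{\HS} \;\ge\; \sum_{n=1}^{\infty} 1 \;=\; \infty,$$
contradicting $w \in \left[\hat{\Gamma_{\{\M_k\}}(X)}\right]^{\wedge}$. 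The main (and essentially only) subtlety is engineering the weight $\varphi$: it must decay to $0$ so that $v$ obeys the Roumieu-type growth bound of Theorem \ref{THM:gammahat}\,(iii), yet must remain large enough on consecutive blocks to force the pairing to blow up; the layered choice $(L_n, N_n)$ reconciles these opposing demands.
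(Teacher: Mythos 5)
Your proof is correct, but it follows a genuinely different route from the paper's. For the implication ``finite sum $\Rightarrow$ membership in the bidual'' you use Theorem \ref{THM:gammahat}\,(iii) together with a componentwise Cauchy--Schwarz bound $\sum_{j}|(w_l)_j||(v_l)_j|\leq \|w_l\|_{\mathtt{HS}}\|v_l\|_{\mathtt{HS}}$; this bypasses entirely the $\ell^1$--$\mathtt{HS}$ norm conversions of \eqref{EQ:in}, the multiplicity bound of Proposition \ref{PROP:dlambdas}, and the doubling of $L$ via condition (M.2) that the paper's argument relies on. For the converse implication the paper writes $\widehat{\Gamma_{\{M_k\}}(X)}=\cap_{L>0}D_L$ with $D_L\cong \ell^{\infty}$, $\widehat{D_L}\cong\ell^1$, and invokes the K\"othe echelon-space identity $\bigl[\cap_{L>0}D_L\bigr]^{\wedge}=\cup_{L>0}\widehat{D_L}$ --- which is precisely the nontrivial ``dual of an intersection'' step that your gliding-hump construction re-proves by hand: your layered weight $\varphi(l)=L_n$ on the blocks $(N_{n-1},N_n]$, with $v_l$ chosen to saturate Cauchy--Schwarz, is the standard mechanism behind that identity. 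The two small points your argument leans on both check out: $r\mapsto M(r)$ is nondecreasing directly from $M(r)=\sup_k\log(r^{\nu k}/M_{\nu k})$, and $\exp(M(\cdot))\geq 1$ (take $k=0$, $M_0=1$), so the finitely many indices $l\leq N_{n_0}$ really can be absorbed into $K_L$. What each approach buys: yours is self-contained and elementary, making the key duality step explicit and avoiding (M.2) and the dimension estimates altogether; the paper's is shorter modulo the cited sequence-space theory and keeps the proof inside the K\"othe framework (echelon spaces, perfectness of $\ell^{\infty}$) used throughout the rest of the paper.
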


\begin{proof}[Proof of Lemma \mbox{\ref{L:perfect1}}] 
$\Longrightarrow$: 
For $L>0$ we consider the echelon space
$$
D_{L}=\left\{v=(v_{l})\in\Sigma: \exists C>0: |(v_l)_j|\leq C\exp(M(L\lambda_{l}^{1/\nu})),\forall 1\leq j\leq d_l\right\},
$$ 
where $\Sigma=\{v=(v_l)_{l\in\mathbb{N}_{0}},~ v_{l}\in\mathbb{C}^{d_l}\}$
is as in \eqref{EQ:sigma}.

By the diagonal transform we have 
$D_{L}\cong l^{\infty},$ and since $l^{\infty}$ is a perfect space so we have $\widehat{D_L}\cong l^{1},$ and it is given by
$$\widehat{D_L}=\left\{w=(w_l)\in\Sigma:\sum_{l=0}^{\infty}\sum_{j=1}^{d_l}\exp(M(L\lambda_{l}^{1/\nu}))|(w_l)_j|<\infty\right\}.$$
By Theorem \ref{THM:gammahat} we know that $\widehat{\Gamma_{\{M_k\}}(X)}=\cap_{L>0}D_L,$ and hence $\left[\widehat{\Gamma_{\{M_k\}}(X)}\right]^{\wedge}=\cup_{L>0}\widehat{D_L}.$
This means that $w\in \left[\widehat{\Gamma_{\{M_k\}}(X)}\right]^{\wedge}$ if and only if there exists $L_2>0,$ such that we have 
$$\sum_{l=0}^{\infty}\sum_{j=1}^{d_l}\exp(M(L_2\lambda_{l}^{1/\nu}))|(w_l)_j|<\infty.$$
Let $1\leq p< q\leq\infty$ and let $a\in\C^{d\times d}.$ Then we have the estimates
\begin{equation}\label{EQ:in}
 \|a\|_{\ell^p(\C)}\leq d^{2\left(\frac1p-\frac1q\right)}\|a\|_{\ell^q(\C)}
 \quad\textrm{ and } \quad
 \|a\|_{\ell^q(\C)}\leq d^{\frac{2}{q}}\|a\|_{\ell^p(\C)},
 \end{equation}
 see e.g. \cite[Lemma 3.2]{DaR1} for a simple proof.
In particular, we have $d^{-1}||a||_{l^{1}}\leq ||a||_{\mathtt{HS}}\leq d ||a||_{l^{1}}$  for $a\in\mathbb{C}^{d\times d}.$ Here we also note the estimate: for every $q,$ $L>0$ and $\delta>0$ there exists $C>0$ such that 
\begin{equation}\label{EQ:estlambdas}
\lambda_{l}^{q}e^{-\delta M\left(L\lambda_{l}^{1/\nu}\right)}\leq C,
\end{equation} 
see e.g. \cite[(2.15)]{DaR2}.
These estimates and Proposition \ref{PROP:dlambdas} imply
\begin{eqnarray}
& & \sum_{l=0}^{\infty}\exp(M(L\lambda_{l}^{1/\nu}))||w_l||_{\mathtt{HS}} \\
&\leq& \sum_{l=0}^{\infty}d_{l}\exp(M(L\lambda_{l}^{1/\nu}))||w_l||_{l^{1}(\mathbb{C}^{d_l})}\nonumber\\
&\leq& C\sum_{l=0}^{\infty}(1+\lambda_{l})^{\frac{n}{\nu}}\exp(-M(L\lambda_{l}^{1/\nu}))\exp(2M(L\lambda_{l}^{1/\nu}))||w_l||_{l^{1}(\mathbb{C}^{d_l})}\nonumber\\
&\leq& C^{\prime}\sum_{l=0}^{\infty}\sum_{j=1}^{d_l}\exp(2M(L\lambda_{l}^{1/\nu}))|(w_l)_j|\nonumber\\
&\leq& C^{\prime\prime} \sum_{l=0}^{\infty}\sum_{j=1}^{d_l}\exp(M(L_2\lambda_{l}^{1/\nu}))|(w_l)_j|<\infty,\nonumber
\end{eqnarray} 
where 
$L_2=L\sqrt{A} H,$ where $A,H>0$ in (M.2). The above claim will be true if we can show 
that  $\exp(2M(L\lambda_{l}^{1/\nu}))\leq\exp(M(L_2\lambda_{l}^{1/\nu})).$ This follows from property (M.2).

$\Longleftarrow$: 
Converse follows similarly using estimates \eqref{EQ:in}.
\end{proof}

We can now prove  Theorem \ref{P:perfect}.

\begin{proof}[Proof of Theorem \ref{P:perfect}]  
We always have $$\Gamma_{\{M_k\}}(X)\subseteq \left[\hat{\Gamma_{\{M_k\}}(X)}\right]^{\wedge}$$ from the definition.
So we need to prove that
$\left[\hat{\Gamma_{\{M_k\}}(X)}\right]^{\wedge}\subseteq \Gamma_{\{M_k\}}(X).$

Let $w=(w_l)_{l\in{\mathbb{N}}_{0}}\in \left[\hat{\Gamma_{\{M_k\}}(X)}\right]^{\wedge}$ , $w_{l}=\left(w_{l}^{j}\right)_{j=1}^{d_l},$ and define 
$$\phi(x):=\sum_{l=0}^{\infty}w_l \cdot e_{l}(x)=\sum_{l=0}^{\infty}\sum_{j=1}^{d_l} w^{j}_{l} e^{j}_{l}(x),$$
the series makes sense because of Lemma \ref{L:perfect1} and estimates \eqref{EQ:weyllaw}  and \eqref{EQ:estlambdas}.
Then we have
\begin{eqnarray} \hat{\phi}(m,k)&=&\left(\phi, e^{k}_{m}\right)_{L^2}\nonumber\\
&=& \int_{X} \phi(x)\bar{e^{k}_{m}(x)}dx\nonumber\\
&=&\sum_{l=0}^\infty\sum_{j=1}^{d_l}\int_{X} w^{j}_{l}e^{j}_{l}(x)\bar{e^{k}_{m}(x)}dx\nonumber\\
&=&\sum_{l=0}^\infty\sum_{j=1}^{d_l} w_{l}^{j}\delta_{lm}\delta_{jk}=w^{k}_{m}, ~~~~1\leq j\leq d_l, ~1\leq k\leq d_m.\end{eqnarray}
This gives $$||\hat{\phi}(m)||_{\mathtt{HS}}=||w_m||_{\mathtt{HS}}.$$
Now since $w\in \left[\hat{\Gamma_{\{M_k\}}(X)}\right]^{\wedge},$  by Lemma \ref{L:perfect1} there exists $L>0$ such that
$$\sum_{l=0}^{\infty}\exp\left(M(L\lambda^{1/\nu}_{l})\right)||w_l||_{\mathtt{HS}}<\infty.$$
Since $\hat{\phi}(l)=w_{l},$ it follows that there eists $C>0$ such that 
$$||\hat{\phi}(l)||_{\mathtt{HS}}\leq C\exp\left(-M(L\lambda^{1/\nu}_{l})\right).$$
By Theorem \ref{THM:gamma}, we have $\phi\in \Gamma_{\{M_k\}}(X).$ Hence we have shown that
$$\left[\hat{\Gamma_{\{M_k\}}(X)}\right]^{\wedge}\subseteq \Gamma_{\{M_k\}}(X).$$

So we have $\left[\hat{\Gamma_{\{M_k\}}(X)}\right]^{\wedge}=\Gamma_{\{M_k\}}(X),$ and  hence $\Gamma_{\{M_k\}}(X)$ is a perfect space. 
\end{proof}
Next we proceed to prove the equivalence of two expressions for the duality.
\begin{lem}
Let $v\in \Gamma_{\{M_k\}}(X)$ and  $w\in\widehat{\Gamma_{\{M_{k}\}}}(X)$, then 
$$\sum_{k=0}^{\infty}||(\hat{v}_{k})||_{\mathtt{HS}}||(w_{k})||_{\mathtt{HS}}<\infty$$ if and only if $$\sum_{k=0}^{\infty}\sum_{i=1}^{d_k}|(\hat{v_{k}})_i| |(w_k)_i|<\infty.$$
\end{lem}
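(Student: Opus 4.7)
The forward direction $(\Longrightarrow)$ is immediate from the Cauchy--Schwarz inequality applied at each $k$: since $\hat{v}_k,w_k\in\C^{d_k}$,
$$\sum_{i=1}^{d_k}|(\hat{v}_k)_i|\,|(w_k)_i|\leq\|\hat{v}_k\|_{\HS}\|w_k\|_{\HS},$$
so summing in $k$ gives $\sum_{k}\sum_{i}|(\hat{v}_k)_i|\,|(w_k)_i|\leq\sum_{k}\|\hat{v}_k\|_{\HS}\|w_k\|_{\HS}<\infty$.

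The converse $(\Longleftarrow)$ is the substantial direction, because no pointwise reverse of Cauchy--Schwarz is available in general: one can have $\hat{v}_k$ and $w_k$ supported on disjoint indices in $\{1,\dots,d_k\}$, making $\sum_i |(\hat{v}_k)_i||(w_k)_i|$ small while $\|\hat{v}_k\|_{\HS}\|w_k\|_{\HS}$ stays large. My plan is to bypass the hypothesis of convergence of the $\ell^1$-type series entirely and to show instead that $\sum_k \|\hat{v}_k\|_{\HS}\|w_k\|_{\HS}<\infty$ holds automatically from the structural characterisations already at our disposal. By Theorem \ref{THM:gamma} applied to $v$, there exist $C,L>0$ with $\|\hat{v}_l\|_{\HS}\leq C\exp\{-M(L\lambda_l^{1/\nu})\}$; by Theorem \ref{THM:gammahat}(iii) applied to $w$, for every $L'>0$ there is $K_{L'}>0$ with $\|w_l\|_{\HS}\leq K_{L'}\exp\{M(L'\lambda_l^{1/\nu})\}$.

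It then remains to pick $L'$ small enough relative to $L$ that the product decays exponentially in $l$. Taking $L':=L/(\sqrt{A}H)$ with $A,H$ from (M.2), the same computation used in the proof of Lemma \ref{L:perfect1} gives $2M(L'\lambda_l^{1/\nu})\leq M(L\lambda_l^{1/\nu})$, and hence
$$\|\hat{v}_l\|_{\HS}\|w_l\|_{\HS}\leq CK_{L'}\exp\{-\tfrac{1}{2}M(L\lambda_l^{1/\nu})\}.$$
Combining estimate \eqref{EQ:estlambdas} (with $\delta=1/2$ and any $q>n/\nu$) with Proposition \ref{PROP:dlambdas} (which in particular implies $\sum_l(1+\lambda_l)^{-q}<\infty$ for $q>n/\nu$) gives convergence of the resulting series and completes the proof.

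The main obstacle is conceptual rather than computational: the equivalence cannot be read off from a termwise inequality in $k$, so one has to exploit the full strength of the Komatsu regularity of $v$ and the $\alpha$-dual growth of $w$, together with the doubling-type estimate provided by (M.2), to balance the decay of $\|\hat{v}_l\|_{\HS}$ against the growth of $\|w_l\|_{\HS}$.
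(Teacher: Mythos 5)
Your proof is correct, and for the converse it takes a genuinely different route from the paper. The forward direction is the same in both (termwise Cauchy--Schwarz). For the converse, the paper does not argue directly from the characterisations: it uses the algebraic identity $\bigl(\sum_i|a_i|\bigr)\bigl(\sum_i|b_i|\bigr)=\sum_i|a_i||b_i|+\sum_i|a_i|\bigl(\sum_j|b_j|-|b_i|\bigr)$ to bound $\|\hat v_k\|_{\HS}\|w_k\|_{\HS}$ by $\sum_i|(w_k)_i||(\hat u_k)_i|$ for an auxiliary element $u$ with $|(\hat u_k)_i|=|(\hat v_k)_i|+Cd_k e^{-M(L\lambda_k^{1/\nu})}$, verifies $u\in\Gamma_{\{M_k\}}(X)$, and then invokes the $\alpha$-dual pairing of $w$ against $u$. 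You instead bypass the stated hypothesis entirely and show that $\sum_k\|\hat v_k\|_{\HS}\|w_k\|_{\HS}<\infty$ holds unconditionally from Theorem \ref{THM:gamma} and Theorem \ref{THM:gammahat}(iii), balancing $L$ and $L'$ via (M.2) and summing with \eqref{EQ:estlambdas} and Proposition \ref{PROP:dlambdas}. Both arguments are sound, and both in fact reveal the same underlying point, which your write-up makes explicit and the paper's leaves implicit: under the standing hypotheses $v\in\Gamma_{\{M_k\}}(X)$ and $w\in[\Gamma_{\{M_k\}}(X)]^{\wedge}$, each of the two series is automatically finite, so the ``if and only if'' is an equivalence of two true statements. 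Your route could even be shortened: Theorem \ref{THM:gammahat}(ii) applied with the same $L$ furnished by Theorem \ref{THM:gamma} gives $\sum_l \exp(-M(L\lambda_l^{1/\nu}))\|w_l\|_{\HS}<\infty$ directly, with no need for the (M.2) doubling step or the Weyl-law summation.
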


\begin{proof}
$\Longrightarrow$: The proof is straightforward, following from the estimate
$$
\left(\sum_{i=1}^d a_i b_i\right)^2 \lesssim \left(\sum_{i=1}^d a_i^2\right) \left(\sum_{i=1}^d b_i^2\right).
$$

$\Longleftarrow$:
We will be using the equality
$$\left(\sum_{i=1}^{n}|a_i|\right)\left(\sum_{i=1}^n|b_i|\right)=\sum_{i=1}^{n}|a_i||b_i|+\sum_{i=1}^{n}|a_i|\left(\sum_{j=1}^n|b_j|-|b_i|\right)$$ for any $a_i,b_i\in\mathbb{R}$, 
yielding
\begin{eqnarray}\label{4.5}
&&||(\hat{v}_{k})||_{\mathtt{HS}}||(w_{k})||_{\mathtt{HS}}\nonumber\\
&\leq& \sum_{i=1}^{d_k}|(\hat{v}_{k})_i|\sum_{i=1}^{d_k} |(w_k)_i|\nonumber\\
&=& \sum_{i=1}^{d_k}|(\hat{v}_{k})_i| |(w_k)_i| +\sum_{i=1}^{d_k}|(w_k)_{i}|\left(\sum_{j=1}^{d_k}|(\hat{v}_k)_j|-|(\hat{v}_k)_i|\right).
\end{eqnarray}
We consider the second term in the above inequality, that is,
\begin{equation}\label{4.6}
\left(\sum_{i=1}^{d_k}|(w_k)_{i}|\left(\sum_{j=1}^{d_k}|(\hat{v}_k)_j|-|(\hat{v}_k)_i|\right)\right)\leq C\left(\sum_{i=1}^{d_k}|(w_{k})_i|(d_ke^{-M(L\lambda_{k}^{1/\nu})})\right),
\end{equation}
for some $C>0$ and $L>0.$
Then using \eqref{4.6} in \eqref{4.5} we get
\begin{equation}\label{4.7}
\sum_{k=0}^\infty||(\hat{v}_{k})||_{\mathtt{HS}}||(w_{k})||_{\mathtt{HS}}
\leq\sum_{k=0}^\infty \sum_{i=1}^{d_k}|(w_k)_i| \left(|(\hat{v}_{k})_i|+Cd_ke^{-M(L\lambda_{k}^{1/\nu})}\right).
\end{equation}
Now let $|{(\hat{u}_{k})_{i}}|=|(\hat{v}_{k})_i|+Cd_ke^{-M(L\lambda_{k}^{1/\nu})}$, for $i=1,2,...,d_{k},$ and $k\in\mathbb{N}_{0}.$
So then we have
$$\sum_{k=0}^\infty||(\hat{v}_{k})||_{\mathtt{HS}}||(w_{k})||_{\mathtt{HS}}\leq\sum_{k=0}^\infty \sum_{i=1}^{d_k}|(w_k)_i| |{(\hat{u}_{k})_{i}}|.$$
Now for some $C^{\prime\prime}>0$ and $L_{2}>0$, we have
\begin{eqnarray}
||{\hat{u}_{k}}||_{\mathtt{HS}}&=&\left(\sum_{i=1}^{d_k}\left(|(\hat{v}_{k})_i|^{2}+C^2d_k^{2}e^{-2M(L\lambda_{k}^{1/\nu})}+2Cd_k|(\hat{v}_{k})_i|e^{-M(L\lambda_{k}^{1/\nu})})\right)\right)^{1/2}\nonumber\\
&\leq& C^{\prime\prime}e^{-M(L_{2}\lambda_{k}^{1/\nu})},
\end{eqnarray}
i.e, $u\in \Gamma_{\{M_{k}\}}(X).$ 
This is true since
\begin{eqnarray}
&&\left(\sum_{i=1}^{d_k}\left(|(\hat{v}_{k})_i|^{2}+C^2d_k^{2}e^{-2M(L\lambda_{k}^{1/\nu})}+2Cd_k|(\hat{v}_{k})_i|e^{-M(L\lambda_{k}^{1/\nu})}\right)\right)^{1/2}\nonumber\\
&\leq& \left(\sum_{i=1}^{d_k}\left(C^2e^{-2M(L\lambda_{k}^{1/\nu})}+C^2 d_{k}^{2}e^{-2M(L\lambda_{k}^{1/\nu})}+2C^2 d_{k}e^{-2M(L\lambda_{k}^{1/\nu})}\right)\right)^{1/2}\nonumber\\
&\leq&\left(\sum_{i=1}^{d_k}C^2\left(1+d_k\right)^{2}e^{-2M(L\lambda_{k}^{1/\nu})}\right)^{1/2}\nonumber\\
&\leq& C(1+d_{k})^{3/2}e^{-M(L\lambda_{k}^{1/\nu})}\nonumber\\
&\leq& C^{\prime}e^{-\frac{1}{2}M(L\lambda_{k}^{1/\nu})}\nonumber\\
&\leq& C^{\prime\prime}e^{-M(L_{2}\lambda^{1/2})},\nonumber
\end{eqnarray}
where $L_{2}=\frac{L}{\sqrt A  H},$ with $A, H$ are constants in condition $(M.2).$

Now since $w\in\widehat{\Gamma_{\{M_{k}\}}}(X),$ so  from \eqref{4.7} we have
$$\sum_{k=0}^\infty||(\hat{v}_{k})||_{\mathtt{HS}}||(w_{k})||_{\mathtt{HS}}\lesssim \sum_{k=0}^\infty \sum_{i=1}^{d_k}|(w_k)_i| |(\hat{u}_{k})_i|<\infty,$$ completing the proof.
\end{proof}

\begin{thm}[Adjointness Theorem]\label{THM:adj}
Let $\{M_k\}$ and $\{N_k\}$ satisfy conditions $(M.{0})$-$(M.{3}).$ A linear mapping $f:\Gamma_{\{M_k\}}(X)\rightarrow \Gamma_{\{N_k\}}(X)$ is sequential  if and only if  $f$ is represented by an infinite tensor $(f_{kjli}), $ ~ $k,j\in \mathbb{N}_{0},$ $1\leq l\leq d_{j}$ and $1\leq i\leq d_k$ such that for any $u\in\Gamma_{\{M_k\}}(X)$ and $v\in\hat{\Gamma_{\{N_k\}}}(X)$ we have
\begin{equation} \label{EQ:f1}
\sum_{j=0}^{\infty}\sum_{l=1}^{d_j}|f_{kjli}||\hat{u}(j,l)|<\infty, ~~\textrm{for ~all}~k\in\mathbb{N}_{0}, ~i=1,2,...,d_k, 
\end{equation} and 
\begin{equation}\label{EQ:f2}
\sum_{k=0}^{\infty}\sum_{i=1}^{d_k}\left|(v_k)_i\right|\left|{\left(\sum_{j=0}^{\infty}f_{kj}\hat{u}(j)\right)_{i}}\right|<\infty.
\end{equation}
Furthermore, the adjoint mapping $\hat{f}:\hat{\Gamma_{\{N_k\}}}(X)\rightarrow \hat{\Gamma_{\{M_k\}}}(X)$ defined by the formula $\hat{f}(v)=v\circ f$ is also sequential, and the transposed  matrix ${(f_{kj})}^{t}$ represents $\hat{f}$, with $f$ and $\hat f$ related by $\langle f(u),v\rangle=\langle u,\hat f (v)\rangle.$
\end{thm}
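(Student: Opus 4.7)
The plan is to reduce both directions of the biconditional, together with the adjoint statements, to the perfectness of $\Gamma_{\{M_k\}}(X)$ from Theorem \ref{P:perfect} and the dual characterisations in Theorems \ref{THM:gamma}--\ref{THM:gammahat}. \textbf{Necessity.} Since each eigenfunction $e^{l}_{j}$ lies in $\Gamma_{\{M_k\}}(X)$ and each ``coordinate'' sequence $\delta^{i}_{k}\in\Sigma$ (the $i$-th unit vector in block $k$, zero elsewhere) lies in $\hat{\Gamma_{\{N_k\}}}(X)$---both being immediate from Theorems \ref{THM:gamma} and \ref{THM:gammahat} for finitely supported sequences---I can set $f_{kjli}:=\widehat{f(e^{l}_{j})}(k,i)$ and observe that, for each fixed $(k,i)$, the composition $\delta^{i}_{k}\circ f$ is a sequential functional on $\Gamma_{\{M_k\}}(X)$ by hypothesis; hence it is represented by a sequence in $\hat{\Gamma_{\{M_k\}}}(X)$, and evaluating on $e^{l}_{j}$ identifies this sequence as $(f_{kjli})_{j,l}$. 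This gives \eqref{EQ:f1} and the identity $\widehat{f(u)}(k,i)=\sum_{j,l}f_{kjli}\hat u(j,l)$; condition \eqref{EQ:f2} is then the defining absolute convergence of the $\alpha$-dual pairing between $\widehat{f(u)}\in\Gamma_{\{N_k\}}(X)$ and $v\in\hat{\Gamma_{\{N_k\}}}(X)$.

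\textbf{Sufficiency and adjointness.} Conversely, given a tensor satisfying \eqref{EQ:f1}--\eqref{EQ:f2}, I define $f$ by $\widehat{f(u)}(k,i):=\sum_{j,l}f_{kjli}\hat u(j,l)$, convergent absolutely by \eqref{EQ:f1}; then \eqref{EQ:f2} combined with the perfectness in Theorem \ref{P:perfect} places $\widehat{f(u)}$ in $\hat{\hat{\Gamma_{\{N_k\}}}}(X)=\Gamma_{\{N_k\}}(X)$. To establish sequentiality of $f$, for $v\in\hat{\Gamma_{\{N_k\}}}(X)$ I set $b_{jl}:=\sum_{k,i}(v_k)_i f_{kjli}$ (well-defined by \eqref{EQ:f2} applied to $u=e^{l}_{j}$) and establish simultaneously the Fubini identity $v(f(u))=\sum_{j,l}\hat u(j,l)\,b_{jl}$ together with $(b_{jl})\in\hat{\Gamma_{\{M_k\}}}(X)$. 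These two facts deliver the sequentiality of $f$, the transposed-tensor representation $(\hat f(v))_{jl}=b_{jl}$, the identity $\langle f(u),v\rangle=\langle u,\hat f(v)\rangle$, and, by reading the same Fubini identity ``from the other side'' with $u\in\Gamma_{\{M_k\}}(X)=\hat{\hat{\Gamma_{\{M_k\}}}}(X)$ playing the role of the dual variable, the sequentiality of $\hat f:\hat{\Gamma_{\{N_k\}}}(X)\to\hat{\Gamma_{\{M_k\}}}(X)$.

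\textbf{Main obstacle.} The substantive difficulty is the Fubini step, because \eqref{EQ:f2} controls $\sum_{k,i}|(v_k)_i|\bigl|\sum_{j,l}f_{kjli}\hat u(j,l)\bigr|$ only with the inner sum already collapsed inside one modulus, and this does not dominate the full non-negative double sum $\sum_{k,i,j,l}|(v_k)_i f_{kjli}\hat u(j,l)|$ that Tonelli would require. I plan to close this gap by first replacing $\hat u$ and $v$ by their termwise moduli, which preserves membership in $\Gamma_{\{M_k\}}(X)$ and $\hat{\Gamma_{\{N_k\}}}(X)$ since the characterisations of Theorems \ref{THM:gamma} and \ref{THM:gammahat} depend only on HS norms, and then invoking the HS-norm versus coordinatewise equivalence from the lemma immediately preceding Theorem \ref{THM:adj} together with the standard estimates \eqref{EQ:in}, \eqref{EQ:estlambdas} and the submultiplicativity (M.2) to produce a non-negative summable majorant, which simultaneously yields the HS-decay bound on $\|b_j\|_{\HS}$ needed by Theorem \ref{THM:gammahat} to conclude $(b_{jl})\in\hat{\Gamma_{\{M_k\}}}(X)$.
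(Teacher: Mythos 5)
Your overall architecture is the same as the paper's: the tensor is extracted by composing $f$ with the coordinate functionals $\delta^i_k$ (the paper's $v^{ki}$, resp.\ $u^{ki}$) and evaluating on the delta sequences supported in a single block, condition \eqref{EQ:f1} is read off from membership of these compositions in $[\Gamma_{\{M_k\}}(X)]^{\wedge}$, and \eqref{EQ:f2} is the $\alpha$-dual pairing of $f(u)$ with $v$. Your use of Theorem \ref{P:perfect} to place $\widehat{f(u)}$ in $\Gamma_{\{N_k\}}(X)$ directly from \eqref{EQ:f2} is in fact slightly cleaner than the paper, which simply takes $f(u)\in\Gamma_{\{N_k\}}(X)$ as part of what ``represented by the tensor'' means. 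You also correctly locate the one substantive difficulty: justifying the interchange $\sum_{k,i}(v_k)_i\sum_{j,l}f_{kjli}\hat u(j,l)=\sum_{j,l}\hat u(j,l)\sum_{k,i}(v_k)_if_{kjli}$ and showing $(b_{jl})\in[\Gamma_{\{M_k\}}(X)]^{\wedge}$ when \eqref{EQ:f2} only controls the sum with the $(j,l)$-summation already collapsed inside one modulus.

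However, your proposed resolution of that obstacle does not work as sketched, and this is a genuine gap. Replacing $\hat u$ and $v$ by their termwise moduli (legitimate, by normality of the perfect spaces) feeds $|\hat u(j,l)|$ and $|(v_k)_i|$ into \eqref{EQ:f2}, but the inner sum $\sum_{j,l}f_{kjli}|\hat u(j,l)|$ still sits inside a single modulus with the \emph{signs of the tensor} $f_{kjli}$ intact, so you still do not dominate the quadruple sum $\sum_{k,i,j,l}|(v_k)_i||f_{kjli}||\hat u(j,l)|$ that Tonelli needs. The block-wise estimates \eqref{EQ:in}, \eqref{EQ:estlambdas}, Proposition \ref{PROP:dlambdas} and (M.2) compare norms within a single eigenspace and give no handle on cancellations of $f_{kjli}$ across $(k,i)$ for fixed $(j,l)$; the full absolute summability is strictly stronger than \eqref{EQ:f1}--\eqref{EQ:f2} and is not established (nor needed) in the paper. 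The paper's device is different and you would need to reproduce something like it: it works with partial sums $I_n$ over $j\leq n$, inserts a unimodular multiplier $\epsilon$ (depending on $(j,l)$) so that $\bigl|\sum_{k,i}(v_k)_if_{kjli}\hat u(j,l)\bigr|$ becomes an actual sum, bounds $I_n$ by $\sum_{k,i}|(v_k)_i|\bigl|\sum_{j\leq n,\,l}f_{kjli}\hat u(j,l)\epsilon_{jl}\bigr|$, and then invokes Lemma \ref{L:L1} --- itself proved via the truncations $u^{(n)}$ and the normality coming from Theorem \ref{P:perfect} --- to pass to the limit. This yields only the weaker, collapsed-modulus bound $\sum_{j,l}\bigl|\sum_{k,i}(v_k)_if_{kjli}\bigr|\,|\hat u(j,l)|<\infty$, which is exactly what is needed to conclude $v\circ f\in[\Gamma_{\{M_k\}}(X)]^{\wedge}$. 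Without either that limiting argument or a proof of the stronger Tonelli bound, your Fubini step, and with it the sequentiality of $f$ and of $\hat f$, remains unproved.
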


Let us summarise the ranges for indices in the used notation as well as give more explanation to \eqref{EQ:f2}. 
For $f:\Gamma_{\{M_k\}}(X)\rightarrow \Gamma_{\{N_k\}}(X)$ and $u\in \Gamma_{\{M_k\}}(X)$ we write
\begin{equation}\label{EQ:notf}
\mathbb C^{d_k}\ni f(u)_k=\sum_{j=0}^{\infty}f_{kj}\widehat{u}(j)=
\sum_{j=0}^\infty \sum_{l=1}^{d_j} f_{kjl}\widehat{u}(j,l),\quad k\in\N_0,
\end{equation} 
so that
\begin{equation}\label{EQ:notf2}
f_{kjl}\in \mathbb C^{d_k},\;
f_{kjli}\in\mathbb C,\quad k,j\in\N_0,\; 1\leq l\leq d_j,\; 1\leq i \leq d_k,
\end{equation} 
and 
\begin{equation}\label{EQ:notf3}
\mathbb C\ni (f(u)_k)_i=f(u)_{ki} = \sum_{j=0}^\infty\sum_{l=1}^{d_j} f_{kjli}\widehat{u}(j,l),\quad k\in\N_0,\; 1\leq i \leq d_k,
\end{equation} 
where we view $f_{kj}$ as a matrix, $f_{kj}\in\mathbb{C}^{d_k\times d_j}$, and the product of the matrices has been explained in \eqref{EQ:notf}.

\begin{rem}
Let us now briefly describe how the tensor $(f_{kjli})$, $k,j\in \mathbb{N}_{0},$ $1\leq l\leq d_{j}$,  $1\leq i\leq d_k$, is constructed given a
sequential mapping $f:\Gamma_{\{M_k\}}(X)\rightarrow \Gamma_{\{N_k\}}(X)$.
For every $k\in \mathbb{N}_{0}$ and $1\leq i\leq d_k$, define the family
$v^{ki}=\left(v^{ki}_{j}\right)_{j\in\mathbb{N}_{0}}$ such that each $v^{ki}_{j}\in \C^{d_j}$ is defined by
\begin{equation}\label{EQ:defv}
 v^{ki}_{j}(l)=\begin{cases}
               1,~~~~~j=k, l=i,\\
               0,~~~~~ \textrm{otherwise}.
            \end{cases}
\end{equation} 
 Then  $v^{ki}\in \left[\Gamma_{\{N_{k}\}}(X)\right]^{\wedge}$, and since
$f$ is sequential we have  $v^{ki}\circ f\in\left[\Gamma_{\{M_{k}\}}(X)\right]^{\wedge}$, and we can write $v^{ki}\circ f=\left(v^{ki}\circ f\right)_{j\in\N_0},$ where $(v^{ki}\circ f)_{j}\in\C^{d_j}.$  
Then for each $1\leq l \leq d_j$ we set
\begin{equation}\label{EQ:deff}
f_{kjli}:=(v^{ki}\circ f)_{j}(l),
\end{equation} 
the $l^{th}$ component of the vector $(v^{ki}\circ f)_{j}\in\C^{d_j}.$
The formula \eqref{EQ:deff} will be shown in the proof of Theorem {\ref{THM:adj}}.
In particular, since for $\phi\in\Gamma_{\{M_{k}\}}(X)$ we have $f(\phi)\in\Gamma_{\{N_{k}\}}(X),$
it will be a consequence of \eqref{EQ: 4.27} and \eqref{EQ: 4.28} later on that
\begin{equation}
\label{EQ:deff2}
v^{ki}\circ f(\phi)=(\widehat{f(\phi)})(k,i)=\sum_{j=0}^{\infty}\sum_{l=1}^{d_j}f_{kjli}\hat{\phi}(j,l),
\end{equation}
so that the tensor $(f_{kjli})$ is describing the transformation of the Fourier coefficients of $\phi$ into those of $f(\phi)$.

Another meaning of condition \eqref{EQ:f1} is that if for each $k\in \mathbb{N}_{0}$ and $1\leq i\leq d_k$ we define
$$
f^{ki}(j,l):=f_{kjli},
$$
then $f^{ki}\in \left[\Gamma_{\{M_{k}\}}(X)\right]^{\wedge}$. Condition \eqref{EQ:f2} is the continuity condition saying that for every  $u\in\Gamma_{\{M_{k}\}}(X)$ we have that
$$
\sum_{j=0}^{\infty}\sum_{l=1}^{d_j}f_{kjli}\hat{u}(j,l)\in \Gamma_{\{N_{k}\}}(X).
$$
\end{rem}

To prove Theorem {\ref{THM:adj}} we first establish the following lemma.

\begin{lem} \label{L:L1} 
Let  $(f_{kjli})_{k,j\in{\N_{0}}, 1\leq l\leq d_j, 1\leq i\leq d_k}$ be an infinite tensor  satisfying \eqref{EQ:f1} and  \eqref{EQ:f2}. Then for any $u\in\Gamma_{\{M_k\}}(X)$ and $v\in \left[\Gamma_{\{N_k\}}(X)\right]^{\wedge},$ we have
$$\lim_{n\rightarrow\infty}\sum_{k=0}^{\infty}\sum_{i=1}^{d_k}\left|(v_k)_i\right|\left|{\left(\sum_{0\leq j\leq n}f_{kj}\hat{u}(j)\right)_{i}}\right|=\sum_{k=0}^{\infty}\sum_{i=1}^{d_k}\left|(v_k)_i\right|\left|{\left(\sum_{j=0}^{\infty}f_{kj}\hat{u}(j)\right)_{i}}\right|.$$
\end{lem}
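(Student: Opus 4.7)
The plan is to establish the equality by dominated convergence on the summation index $(k,i)$, with the principal work being the verification of a summable dominator.

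First I would observe that, by \eqref{EQ:f1}, the series $\sum_{j,l} f_{kjli}\hat u(j,l)$ converges absolutely for each fixed $(k,i)$. Consequently the partial sums $S_n(k,i):=\bigl(\sum_{0\leq j\leq n}f_{kj}\hat u(j)\bigr)_i$ converge to $S_\infty(k,i)$, and $|S_n(k,i)|\to|S_\infty(k,i)|$ as $n\to\infty$. The triangle inequality provides the $n$-uniform pointwise bound
\[
|(v_k)_i|\,|S_n(k,i)|\;\leq\;D(k,i)\;:=\;|(v_k)_i|\sum_{j=0}^{\infty}|(f_{kj}\hat u(j))_i|.
\]
Once $\sum_{k,i}D(k,i)<\infty$ is verified, the dominated convergence theorem on the countable index set $\{(k,i):k\in\N_0,\,1\leq i\leq d_k\}$ yields the asserted identity at once.

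The crux is therefore to show $\sum_{k,i}D(k,i)<\infty$. Using $|(f_{kj}\hat u(j))_i|\leq\sum_l|f_{kjli}||\hat u(j,l)|$ and Tonelli for non-negative terms, this reduces to proving $\sum_{k,i,j,l}|(v_k)_i||f_{kjli}||\hat u(j,l)|<\infty$. I would attack this by a two-stage sign-absorption argument, exploiting the fact that membership in both $\Gamma_{\{M_k\}}(X)$ (Theorem \ref{THM:gamma}) and its $\alpha$-dual (Theorem \ref{THM:gammahat}) is characterised through Hilbert--Schmidt norms, so replacing any Fourier coefficient by one of the same modulus keeps the sequence in the class. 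Defining $\tilde u\in\Gamma_{\{M_k\}}(X)$ via $\widehat{\tilde u}(j,l)=|\hat u(j,l)|$ and, for each $(j,l)$, a phase-modified $\tilde v^{j,l}\in[\Gamma_{\{N_k\}}(X)]^\wedge$ via $(\tilde v^{j,l}_k)_i=\alpha_{k,i,j,l}(v_k)_i$ with $|\alpha_{k,i,j,l}|=1$ chosen so that $(\tilde v^{j,l}_k)_i f_{kjli}=|(v_k)_i||f_{kjli}|$, one uses \eqref{EQ:f2} applied to the unit element $u^{jl}$ from the Remark after Theorem \ref{THM:adj} to obtain $G(j,l):=\sum_{k,i}|(v_k)_i||f_{kjli}|<\infty$ for each $(j,l)$. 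To upgrade this pointwise finiteness to summability against the weight $|\hat u(j,l)|$, I would show that $(G(j,l))_{j,l}$ lies in $[\Gamma_{\{M_k\}}(X)]^\wedge$ by pairing it absolutely with every test element of $\Gamma_{\{M_k\}}(X)$; this is done by iterating the sign-absorption trick on the $u$-side and invoking the perfectness of $\Gamma_{\{M_k\}}(X)$ established in Theorem \ref{P:perfect}, together with the multiplicity bound in Proposition \ref{PROP:dlambdas}.

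The main obstacle is precisely this two-stage manipulation: the unimodular phase needed to align $(v_k)_i f_{kjli}\hat u(j,l)$ with its modulus depends on all four indices and does not factor as a product of a $(k,i)$-piece and a $(j,l)$-piece, so the absolute summability cannot be obtained from a single application of \eqref{EQ:f2}. Splitting the sign absorption between the $u$-side and the $v$-side, combined with the HS-norm characterisations of $\Gamma_{\{M_k\}}(X)$ and $[\Gamma_{\{N_k\}}(X)]^\wedge$, is what allows the reduction to go through. Once $\sum_{k,i}D(k,i)<\infty$ is in hand, the dominated convergence step is immediate and the proof of the lemma is complete.
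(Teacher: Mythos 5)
Your overall architecture (dominated convergence over the index set $\{(k,i)\}$, with dominator $D(k,i)=|(v_k)_i|\sum_{j,l}|f_{kjli}||\hat u(j,l)|$) is a genuinely different route from the paper's, which instead truncates $u$ to $u^{n}$, proves $\langle f(u^{n}),v\rangle\to\langle f(u),v\rangle$ via the duality identity $\langle f(u),v\rangle=\langle u,v\circ f\rangle$ with $v\circ f\in[\Gamma_{\{M_k\}}(X)]^{\wedge}$, and then inserts absolute values by normality of the perfect spaces (Theorem \ref{P:perfect}). The problem is that your route stands or falls with the four-fold absolute summability $\sum_{k,i,j,l}|(v_k)_i||f_{kjli}||\hat u(j,l)|<\infty$, and this is exactly the point your argument does not actually establish. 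The first stage is fine: taking $u=u^{jl}$ in \eqref{EQ:f2} gives $G(j,l)=\sum_{k,i}|(v_k)_i||f_{kjli}|<\infty$ for each fixed $(j,l)$ (no phase modification of $v$ is even needed, since \eqref{EQ:f2} already carries the absolute values). But the second stage is circular: to conclude $\sum_{j,l}G(j,l)|\hat u(j,l)|<\infty$ you propose to show $G\in[\Gamma_{\{M_k\}}(X)]^{\wedge}$ ``by pairing it absolutely with every test element,'' which is by definition the very statement $\sum_{j,l}G(j,l)|\hat\phi(j,l)|<\infty$ you are trying to prove.

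The sign-absorption device cannot close this loop on its own. Whatever unimodular phases you distribute between the $u$-side and the $v$-side, a single application of \eqref{EQ:f2} to fixed elements $u'$, $v'$ of the normal hulls only ever controls sums of the form $\sum_{k,i}|(v_k)_i|\,\bigl|\sum_{j,l}f_{kjli}\hat u'(j,l)\bigr|$, with the modulus \emph{outside} the $(j,l)$-sum; to reach $\sum_{k,i}|(v_k)_i|\sum_{j,l}|f_{kjli}\hat u(j,l)|$ you must choose phases $\epsilon^{(k,i)}_{jl}$ depending on $(k,i)$, i.e. you need a bound on $\sum_{k,i}|(v_k)_i|\,|(f(u^{(k,i)}))_{ki}|$ that is \emph{uniform} over the family $\{u'\colon |\hat u'(j,l)|\le|\hat u(j,l)|\}$. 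The hypotheses \eqref{EQ:f1}--\eqref{EQ:f2} are purely pointwise in $u$ and give no such uniformity; supplying it requires an additional ingredient (a uniform-boundedness argument on the normal hull of $u$, or a quantitative growth estimate $G(j,l)\le K_{L}\exp(M(L\lambda_j^{1/\nu}))$ for every $L$ via Theorem \ref{THM:gammahat}(iii)), none of which appears in the proposal. Note also that you are attempting something strictly stronger than what the paper proves: the paper's proof of Lemma \ref{L:L1} and of \eqref{EQ:4.24} only ever obtains sums with the modulus outside one of the two blocks of indices. So either supply the uniform boundedness step explicitly, or switch to the paper's truncation-and-duality argument, which sidesteps the joint absolute summability altogether.
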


\begin{proof}[Proof of Lemma \mbox{\ref{L:L1}}]

 Let $u\in \Gamma_{\{M_k\}}(X)$ and $u\approx \left(\hat u(l)\right)_{l\in {\mathbb{N}}_{0}}.$ Define $u^{n}:=\left(\hat u ^{(n)}(l)\right)_{l\in {\mathbb{N}}_{0}}$  by setting  
\begin{equation}
\hat{u}^{(n)}(l)=\begin{cases}
\hat u(l), \; l\leq n, \nonumber\\
 0, \quad\;\, l>n.\end{cases}\nonumber
\end{equation} 
 Then for any  $w\in \hat{\Gamma_{\{M_k\}}}(X),$  $\langle u-u^{n}, w\rangle\rightarrow 0$ as $n\rightarrow \infty.$ This is true since $\sum_{l=0}^{\infty}\left|\hat{u}(l)\cdot w_l \right|<\infty$ so that  
 $$\left| \langle u-u^{n}, w\rangle \right|\leq \sum_{l\geq n}\left|\hat{u_l}\cdot w_{l}\right|\rightarrow {0}$$ as $n\rightarrow \infty.$ 
  Now for any $u\in \Gamma_{\{M_k\}}(X)$  and $v\in \left[\Gamma_{\{N_k\}}(X)\right]^{\wedge} $ and from \eqref{EQ:f1} and \eqref{EQ:f2} we have 
   \begin{multline}\label{EQ:long}
  \langle f(u), v \rangle=\sum_{k=0}^{\infty} \left(f(u)\right)_{k}\cdot v_{k}
=\sum_{k=0}^{\infty}\left(\sum_{j=0}^{\infty}f_{kj}\hat{u}(j)\right) \cdot v_{k}
\\
=\sum_{k=0}^{\infty}\sum_{j=0}^\infty\sum_{\ell=1}^{d_j}\sum_{i=1}^{d_k} f_{kj\ell i}\widehat{u}(j,\ell)(v_k)_{i}
=\sum_{j=0}^{\infty}\sum_{\ell=1}^{d_j}\widehat{u}(j,\ell) \sum_{k=0}^{\infty}\sum_{i=1}^{d_k}
f_{kj\ell i}(v_k)_i
\\
=\sum_{j=0}^{\infty}\sum_{\ell=1}^{d_j}\widehat{u}(j,\ell) \sum_{k=0}^{\infty}
f_{kj\ell }\cdot v_k
=\sum_{j=0}^\infty\hat{u}(j)\cdot (v\circ f)_{j}=\langle u, v\circ f\rangle,
  \end{multline}
  where  
 $$\mathbb C^{d_j}\ni (v\circ f)_{j}=\left\{\sum_{k=0}^{\infty}
f_{kj\ell }\cdot v_k\right\}_{\ell=1}^{d_j},\quad j\in \mathbb{N}_{0},$$ and 
$$v\circ f=\left\{(v\circ f)_{j}\right\}_{j=0}^{\infty}.$$

For sequential mapping $f$, $v\circ f\in\left[\Gamma_{\{M_k\}}(X)\right]^{\wedge} $ and 
$$\sum_{j=0}^{\infty}u(j)\cdot(v\circ f)_j=\langle u, v\circ f\rangle=\left(v\circ f\right)(u),$$ so that we can write $\left(v\circ f\right)\in \mathbb{C}^{d_j}$and also $\left(v\circ f\right)(u)=\langle v, f(u)\rangle$. So for any $u\approx(\hat{u}(j))_{j\in\N_0}\in \Gamma_{\{M_k\}}(X) $  from the definition of $\left[\Gamma_{\{M_k\}}(X)\right]^{\wedge}$ we have $$
\sum_{j\in \N_0}\sum_{l=1}^{d_j}\left|(v\circ f)_{jl}\right|\left|\hat{u}(j,l)\right|<\infty.$$  Hence the series  $\sum_{j=0}^{\infty}\left|(v\circ f)_{j}\cdot\hat{u}(j)\right|$ is convergent.

We can then conclude that $v\circ f\in \left[\Gamma_{\{M_k\}}(X)\right]^{\wedge} $ and we have
$$\langle f(u)-f(u^{n}), v\rangle=\langle u-u^{n}, v\circ f\rangle\rightarrow 0$$ as $n\rightarrow \infty.$ Therefore,
$$\langle f(u), v\rangle=\lim_{n\rightarrow\infty}\langle f(u^{n}), v\rangle,$$ for all $u\in \Gamma_{\{M_{k}\}}(X)$ and $v\in[\Gamma_{\{N_k\}}(X)]^{\wedge}.$
Hence for any $u\in \Gamma_{\{M_k\}}(X)$ and $v\in \left[\Gamma_{\{N_k\}}(X)\right]^{\wedge}$ we have
$$\lim_{n\rightarrow\infty}\sum_{k=0}^{\infty}v_k\cdot\left(\sum_{0\leq j\leq n}f_{kj}\hat{u}(j)\right) =\sum_{k=0}^{\infty}v_k\cdot\left(\sum_{j=0}^{\infty}f_{kj}\hat{u}(j)\right),$$
that is,
$$\lim_{n\rightarrow\infty}\sum_{k=0}^{\infty}\sum_{i=1}^{d_k}(v_k)_i\left(\sum_{j\leq n}f_{kj}\hat{u}(j)\right)_i =\sum_{k=0}^{\infty}\sum_{i=1}^{d_k}(v_k)_{i}\left(\sum_{j=0}^{\infty}f_{kj}\hat{u}(j)\right)_i.$$ Now we will use the fact that if $u\in \Gamma_{\{M_k\}}(X)$ then $|u|\in\Gamma_{\{M_k\}}(X)$ where $|u|=\left(\hat{ |u}|_j\right)_{j\in\mathbb{N}_{0}},$ $\hat{ |u|}_{j}\in \mathbb{R}^{d_j},$  with
\begin{align}
    \hat {|u|}_{j} &:= \begin{bmatrix}
          |\hat{ u}(j,1)| \\
          |\hat{ u}(j,2)|  \\
           \vdots \\
          |\hat {u}(j,d_j)| 
         \end{bmatrix},\nonumber
  \end{align} in view of the Theorem \ref{P:perfect}. The 
 same is true for the dual space $\left[\Gamma_{\{N_k\}}(X)\right]^{\wedge}.$ 
 So then this argument gives
$$\lim_{n\rightarrow\infty}\sum_{k=0}^{\infty}\sum_{i=1}^{d_k}\left|(v_k)_i\right|\left|{\left(\sum_{0\leq j\leq n}f_{kj}\hat{u}(j)\right)_{i}}\right|=\sum_{k=0}^{\infty}\sum_{i=1}^{d_k}\left|(v_k)_i\right|\left|{\left(\sum_{j=0}^{\infty}f_{kj}\hat{u}(j)\right)_{i}}\right|.$$ The proof is complete.
\end{proof}

 \begin{proof}[Proof of Theorem \mbox{\ref{THM:adj}}]  
 Let us assume first that the mapping $f:\Gamma_{\{M_k\}}(X)\rightarrow\Gamma_{\{N_{k}\}}(X)$ can be represented by $f=(f_{kjli})_{k,j\in\mathbb{N}_{0},
 1\leq l\leq d_j, 1\leq i\leq d_k},$ an infinite tensor such that\begin{equation}\sum_{j=0}^{\infty}\sum_{l=1}^{d_j}|f_{kjli}||\hat{u}(j,l)|<\infty, ~~\textrm{for ~all}~k\in\mathbb{N}_{0},~ i=1,2,\ldots,d_k,\end{equation}  and
 \begin{equation}\sum_{k=0}^{\infty}\sum_{i=1}^{d_k}\left|(v_k)_i\right|\left|{\left(\sum_{j=0}^{\infty}f_{kj}\hat{u}(j)\right)_{i}}\right|<\infty\end{equation} hold for all $u\in\Gamma_{\{M_k\}}(X)$ and $v\in[\Gamma_{\{N_k\}}(X)]^{\wedge}.$
 
 Let $\hat{u}_{1}=\left(\hat{u_1}(p)\right)_{p\in{\mathbb{N}}_{0}}$ be such that for some $j,l$ where $j\in{\mathbb{N}}_{0},$ $1\leq l\leq d_j$  we have 
 \begin{equation} \hat{u_1}(p,q)=\begin{cases}
 {1},~~~~ p=j, \; q=l,\\
 0, ~~~~~~\textrm{otherwise}.\end{cases}\nonumber\end{equation}
 
 Then $u_{1}\in \Gamma_{\{M_k\}}(X)$  so $fu_1=f(u_{1})\in\Gamma_{\{N_k\}}(X)$ and
 \begin{eqnarray}\label{EQ:4.21}
 \left(fu_1\right)_{k}&=&\sum_{p=0}^{\infty}f_{kp}\hat{u}_{1}(p)\nonumber\\
 &=&\sum_{p=0}^{\infty}\sum_{q=1}^{d_p}f_{kpq}\hat{u_{1}}(p,q)\nonumber\\
 &=& \sum_{q=1}^{d_j}f_{kjq}\hat{u_1}(j,q)\nonumber\\
 &=&f_{kjl}\in \C^{d_k}.
 \end{eqnarray}

 We now first show that
 $$\widehat{\left(fu\right)}(k)=\sum_{j=0}^{\infty}\sum_{l=1}^{d_j}f_{kjl}\hat{u}(j,l),$$ where $f_{kjli}\in \mathbb{C}$ for each $k,j\in\mathbb{C},$ $1\leq l \leq d_j$ and $1\leq i\leq d_k.$
The way in which $f$ has been defined we have
 $$(fu)_{k}= \sum_{j=0}^{\infty}\sum_{l=1}^{d_j}f_{kjl}\hat{u}(j,l),\quad f_{kjl}\in \C^{d_k}.$$
Also since $u\in\Gamma_{\{M_k\}}(X)$, from our assumption we have $fu\in\Gamma_{\{N_k\}}(X)$  and $fu\approx \left(\hat{(fu)}(j)\right)_{j\in\mathbb{N}_{0}}$so $(fu)_{k}\approx\hat{(fu)}(k).$

We  can then write $\hat{(fu)}(k)=\sum_{j}\sum_{l=1}^{d_j}f_{kjl}\hat{u}(j,l).$
Since we know that $v\in{\left[\Gamma_{\{N_k\}}(X)\right]^{\wedge}}$ and $fu\in\Gamma_{\{N_k\}}(X),$ we have
$$\sum_{k=0}^{\infty}\sum_{i=1}^{d_k}|(v_k)_i||(\widehat{(fu)}(k))_i|=\sum_{k=0}^{\infty}\sum_{i=1}^{d_k}|(v_k)_i||\sum_{j=0}^{\infty}\sum_{l=1}^{d_j} f_{kjli}\hat{u}(j,l)|<\infty.$$
 In particular  using the definition of $u_1$ and \eqref{EQ:4.21} we get
\begin{eqnarray}\label{EQ:4.22}\sum_{k=0}^{\infty}\sum_{i=1}^{d_k}|(v_k)_i|\left|\sum_{p=0}^{\infty} \sum_{q=1}^{d_k}f_{kpqi}\hat{u_1}(p,q)\right|=\sum_{k=0}^{\infty}\sum_{i=1}^{d_k}|(v_k)_i||f_{kjli}|<\infty,\end{eqnarray}
for any $j\in \mathbb{N}_{0}$ and $1\leq l\leq d_j.$\\
Now for any $u\in \Gamma_{\{M_k\}}(X)$ consider 
$$J=\sum_{j=0}^{\infty}\sum_{l=1}^{d_j}|\sum_{k=0}^{\infty}\sum_{i=1}^{d_k}(v_{k})_{i}f_{kjli}| |\hat{u}(j,l)|.$$
Then we consider the series 
$$I_{n}:=\sum_{j\leq n}\sum_{l=1}^{d_j}|\sum_{k=0}^{\infty}\sum_{i=1}^{d_k}(v_{k})_{i}f_{kjli}| |\hat{u}(j,l)|,$$
so that we have
\begin{eqnarray}
I_{n}&=&\sum_{j\leq n}\sum_{l=1}^{d_j}|\sum_{k=0}^{\infty}\sum_{i=1}^{d_k}(v_{k})_{i}f_{kjli}| |\hat{u}(j,l)|\nonumber\\
&=&\sum_{j\leq n}\sum_{l=1}^{d_j}|\sum_{k=0}^{\infty}\sum_{i=1}^{d_k}(v_{k})_{i}f_{kjli}\hat{u}(j,l)|.\nonumber
\end{eqnarray}
Let $\epsilon=(\epsilon_i)_{1\leq i\leq d_{k}},$ $k\in{\mathbb{N}}_{0}$, be  such that $\epsilon_i\in\mathbb{C}$ and $|\epsilon_i|=1, $ for  all $i$ and  such that
$$|\sum_{k=0}^{\infty}\sum_{i=1}^{d_k}(v_{k})_{i}f_{kjli})\hat{u}(j,l)|=\sum_{k=0}^{\infty}\sum_{i=1}^{d_k}(v_{k})_{i}f_{kjli}\hat{u}(j,l)\epsilon_{i}.$$
Then 
\begin{eqnarray}
I_n &=&\sum_{j\leq n}\sum_{l=1}^{d_j}\sum_{k=0}^{\infty}\sum_{i=1}^{d_k}(v_{k})_{i}f_{kjli}\hat{u}(j,l)\epsilon_{i}\nonumber\\
&\leq & \sum_{k=0}^{\infty}\sum_{i=1}^{d_k}|(v_{k})_{i}|\left|\sum_{j\leq n}\sum_{l=1}^{d_j}f_{kjli})\hat{u}(j,l)\epsilon_{i} \right|.
\end{eqnarray}
It follows from Lemma \ref{L:L1} that
$$\lim_{n\rightarrow\infty}\sum_{k=0}^{\infty}\sum_{i=1}^{d_k}|(v_{k})_{i}|\left|\sum_{j\leq n}\sum_{l=1}^{d_j}f_{kjli}\hat{u}(j,l)\epsilon_{i} \right| =  \sum_{k=0}^{\infty}\sum_{i=1}^{d_k}|(v_{k})_{i}|\left|\sum_{j=0}^{\infty}\sum_{l=1}^{d_j}f_{kjli}\hat{u}(j,l)\epsilon_{i} \right|<\infty.$$

Then \begin{equation}\label{EQ:4.24} J=\sum_{j=0}^{\infty}\sum_{l=1}^{d_j}|\sum_{k=0}^{\infty}\sum_{i=1}^{d_k}(v_{k})_{i}f_{kjli}| |\hat{u}(j,l)|<\infty.\end{equation}

So  we proved that if $(f_{kjli})$ satistfies
\begin{itemize}
\item $\sum_{j=0}^{\infty}\sum_{l=1}^{d_j}|f_{kjli}||\hat{u}(j,l)|<\infty$,
\item $\sum_{k=0}^{\infty}\sum_{i=1}^{d_k}\left|(v_k)_i\right|\left|{\left(\sum_{j=0}^{\infty}f_{kj}\hat{u}(j)\right)_{i}}\right|<\infty$,
\end{itemize}
 then  for any $u\in{\Gamma_{\{M_{k}\}}(X)}$ and $v\in\left[\Gamma_{\{N_{k}\}}(X)\right]^{\wedge}$ we have from \eqref{EQ:4.22} and \eqref{EQ:4.24} respectively, that is,
 \begin{enumerate}
 \item $\sum_{k=0}^{\infty}\sum_{i=1}^{d_k}|(v_k)_i||f_{kjli}|<\infty$, 
 \item $\sum_{j=0}^{\infty}\sum_{l=1}^{d_j}|\sum_{k=0}^{\infty}\sum_{i=1}^{d_k}(v_{k})_{i}f_{kjli})| |\hat{u}(j,l)|<\infty.$
 \end{enumerate}
Now recall that for $f: \Gamma_{\{M_{k}\}}(X)\rightarrow \Gamma_{\{N_{k}\}}(X)$ we have
$$(f(u))_{k}=\sum_{j=0}^{\infty}\sum_{l=1}^{d_j}f_{kjl}\hat{u}(j,l),$$ 
for any $u\in{\Gamma_{\{M_{k}\}}(X)},$ then for any  $v\in\left[\Gamma_{\{N_{k}\}}(X)\right]^{\wedge}$, the composed mapping  $v\circ f: \Gamma_{\{M_{k}\}}(X)\rightarrow \mathbb{C}$ is given by
\begin{eqnarray}(v\circ f)(u)&=&\sum_{k=0}^{\infty}v_k\cdot (f(u))_k=\sum_{k=0}^{\infty}\sum_{i=1}^{d_k}(v_{k})_{i}\left(\sum_{j=0}^{\infty} \sum_{l=1}^{d_j}f_{kjli}\hat u(j,l)\right)\nonumber\\
&=&\sum_{j=0}^{\infty}\sum_{l=1}^{d_j}\left(\sum_{k=0}^{\infty} \sum_{i=1}^{d_k}(v_{k})_{i}f_{kjli}\right)\hat u(j,l).\end{eqnarray}

So by (ii) we get that
$$\left|(v\circ f)(u)\right|\leq\sum_{j=0}^{\infty}\sum_{l=1}^{d_j}|\sum_{k=0}^{\infty}\sum_{i=1}^{d_k}(v_{k})_{i}f_{kjli}| |\hat{u}(j,l)|<\infty.$$
So $\hat f(v)=(\hat f (v)_{j l})_{j\in \mathbb N, 1\leq l\leq d_j},$ with $\hat{f}(v)_{jl}= \sum_{k=0}^{\infty}\sum_{i=1}^{d_k}(v_{k})_{i}f_{kjli}\in [\Gamma_{\{M_{k}\}}(X)]^{\wedge}$ (from the definition of $ [\Gamma_{\{M_{k}\}}(X)]^{\wedge}$), that is $f$ is sequential.
And then $\langle f(u), v\rangle=\langle u,\hat f(v)\rangle$  is also true.

\medskip
Now to prove the converse part we assume that $f:\Gamma_{\{M_k\}}(X)\rightarrow \Gamma_{\{N_k\}}(X)$ is sequential. We have to show that $f$ can be represented as  $f\approx(f_{kjli})_{k,j\in\mathbb{N}_{0},1\leq l\leq d_j, 1\leq i\leq d_k}$ and satisfies \eqref{EQ:f1} and \eqref{EQ:f2}.

Define for  $k,i$ where $k\in \mathbb{N}_{0}$ and $1\leq i\leq d_k,$  the sequence $u^{ki}=\left(u^{ki}_{j}\right)_{j\in\mathbb{N}_{0}}$ such that $u^{ki}_{j}\in \C^{d_j}$ and $u^{ki}_j(l)=\hat{u^{ki}}({j,l})$, given by 
\[
  u^{ki}_{j}(l)=\hat{u^{ki}}({j,l})=\begin{cases}
               1,~~~~~j=k, l=i,\\
               0,~~~~~ \textrm{otherwise}.
            \end{cases}
\]

Then  $u^{ki}\in \left[\Gamma_{\{N_{k}\}}(X)\right]^{\wedge}.$
Now since  $f$ is sequential we have  $u^{ki}\circ f\in\left[\Gamma_{\{M_{k}\}}(X)\right]^{\wedge}$ and $u^{ki}\circ f=\left(u^{ki}\circ f\right)_{j\in\N_0},$ where $(u^{ki}\circ f)_{j}\in\C^{d_j}.$  We denote $u^{ki}\circ f=\left(f^{ki}_{j}\right)_{j\in{\N_{0}}},$ where $f^{ki}_j=(u^{ki}\circ f)_{j}.$ Then $(f^{ki}_{j})_{j\in\N_{0}}\in \left[\Gamma_{\{M_{k}\}}(X)\right]^{\wedge}$ and $f^{ki}_{j}\in\C^{d_j}.$

Then for any $\phi\approx \left(\hat{\phi}(j)\right)_{j\in\N_0}\in \Gamma_{\{M_k\}}(X)$ we have
\begin{equation}\sum_{j=0}^{\infty}\sum_{l=1}^{d_j}|f^{ki}_{jl}||\hat{\phi}(j,i)|<\infty.\end{equation}
 
For $\phi\in\Gamma_{\{M_{k}\}}(X)$ we can write $f(\phi)\in\Gamma_{\{N_{k}\}}(X).$ We can write $$f(\phi)=\left((f(\phi))^{\wedge}(p)\right)_{p\in\N_{0}}.$$ So
\begin{eqnarray}\label{EQ: 4.27}
u^{ki}\circ f(\phi)&=&\sum_{j=0}^{\infty}u^{ki}_j\widehat{(f(\phi))}_j\nonumber\\
&=&\sum_{j=0}^{\infty}\sum_{l=1}^{d_j}u^{ki}_{jl}\widehat{(f(\phi))}(j,l)\nonumber\\
&=&(\widehat{f(\phi)})(k,i)~(\textrm{from~ the ~definition~ of~} u^{ki}).\end{eqnarray}

We have  $u^{ki}\circ f=(f^{ki})_{j}\in \left[\Gamma_{\{M_k\}(X)}\right]^{\wedge},$ so 
\begin{eqnarray}\label{EQ: 4.28}
(u^{ki}\circ f)(\phi)
&=&\sum_{j=0}^{\infty}f^{ki}_{j}\hat{\phi}(j)\nonumber\\
&=&\sum_{j=0}^{\infty}\sum_{l=1}^{d_j}f^{ki}_{jl}\hat{\phi}(j,l).
\end{eqnarray}
From \eqref{EQ: 4.27} and \eqref{EQ: 4.28} we have $(\widehat{f(\phi)})(k,i)=\sum_{j=0}^{\infty}\sum_{l=1}^{d_j}f^{ki}_{jl}\hat{\phi}(j,l).$\\
Hence  $(f(\phi))_{ki}=\sum_{j=0}^{\infty}\sum_{l=1}^{d_j}f^{ki}_{jl}\hat{\phi}(j,l),~~k\in\mathbb{N}_{0},$ and $1\leq i\leq d_k,$ that is  $f$ is represented by the tensor $\left\{(f^{ki}_{jl})\right\}_{k,j\in\N_0, 1\leq i\leq d_k, 1\leq l\leq d_j}$.\\
If we denote $f^{ki}_{jl}$ by $f^{ki}_{jl}= f_{kjli},$ we can say that $f$ is represented by the tensor $(f_{kjli})_{k,j\in\N_0, 1\leq l\leq d_j, 1\leq i\leq d_k}.$ 
Also let $v\in\hat{\left[\Gamma_{\{N_k\}}(X)\right]}.$ Since $f(\phi)\in \Gamma_{\{N_k\}}(X)$ for $\phi\in \Gamma_{\{M_k\}}(X),$ then from the definition of $\hat{\left[\Gamma_{\{N_k\}}(X)\right]}$  we have
$$\sum_{k=0}^{\infty}\sum_{i=1}^{d_k}|(v_k)_i|\sum_{j=0}^{\infty}\sum_{l=1}^{d_j}f_{kjli}\hat{\phi}(j,l)|<\infty.$$
This completes the proof of Theorem \ref{THM:adj}.
\end{proof}

\section{Beurling class of ultradifferentiable functions and ultradistributions}
\label{SEC:Beurling}

In this section we briefly summarise the counterparts of the results of the previous section for the case of Komatsu classes of Beurling type ultradifferentiable functions and ultradistributions.
For more details we refer to  \cite{DaR2} for a more detailed description of these spaces as well as of their duals and $\alpha$-duals in the sense of K\"othe.

 The class $\Gamma_{(M_k)}(X)$ is the space of $C^{\infty}$ functions $\phi$ on $X$ such that for every $h>0$ there exists $C_{h}>0$ such that we have
 \begin{equation}
 ||E^{k}\phi||_{L^2(X)}\leq C_{h}h^{\nu k}M_{\nu k}, ~k=0,1,2,...
\end{equation}
For these spaces, we replace condition (M.3) by condition

\medskip
\noindent
(M.3$'$) \quad for every $ l>0$ there exists $ C_{l}>0 $ such that 
$k!\leq C_{l} l^{k}M_{k}$, for all $k\in\mathbb{N}_{0}.$

\medskip

The counterpart of   \cite[Theorem \ref{THM:gamma} and Theorem \ref{THM:gammahat}]{DaR2}, holds for this class as well, namely, we have

\begin{thm}
\label{THM: Beurling 1}
Assume conditions (M.0), (M.1), (M.2), (M.$3'$). We have $\phi\in\Gamma_{(M_k)}(X)$ if and only if for every $L>0$ there exists $C_L>0$ such that
$$||\hat{\phi}(l)||_{\mathtt{HS}}\leq C_L\exp\{-M(L\lambda_{l}^{1/\nu})\}, \quad \textrm{ for all } l\geq 1.$$
For the dual space and for the $\alpha$-dual, the following statements are equivalent:
\begin{enumerate}
\item $v\in\Gamma^{\prime}_{(M_k)}(X);$
\item $v\in \left[\Gamma_{(M_k)}(X)\right]^{\wedge}$;
\item there exists $L>0$ such that we have
$$\sum_{l=0}^{\infty}\exp\left(-M(L\lambda_{l}^{1/\nu})\right)||v_{l}||_{\mathtt{HS}}<\infty;$$
\item there exists $L>0$ and $K>0$ such that $$||v_l||_{\mathtt{HS}}\leq K\exp\left(M(L\lambda_{l}^{1/\nu})\right)$$ holds for all $l\in\mathbb{N}_{0}.$ 
\end{enumerate}
\end{thm}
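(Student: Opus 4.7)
The plan is to follow the proof strategy of Theorems~\ref{THM:gamma} and~\ref{THM:gammahat} from \cite{DaR2} together with the arguments of Lemma~\ref{L:perfect1} in the present paper, swapping the existential quantifier ``there exists $h>0$'' (Roumieu case) for the universal ``for every $h>0$'' (Beurling case) and tracking how the quantifiers propagate through each estimate. The statement naturally splits into two independent pieces: the Fourier-side characterisation of the test-function space $\Gamma_{(M_k)}(X)$, and the chain of equivalences (i)--(iv) for the dual/$\alpha$-dual side.

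For the first part, the forward direction begins with the spectral identity $\widehat{E^k\phi}(l,j)=\lambda_l^k\hat\phi(l,j)$, which by Plancherel gives $\lambda_l^k\|\hat\phi(l)\|_{\HS}\leq\|E^k\phi\|_{L^2}\leq C_h h^{\nu k}M_{\nu k}$ for every $h>0$ and every $k\in\N_0$. Dividing by $\lambda_l^k$ and taking the infimum in $k$ using $e^{-M(r)}=\inf_k M_{\nu k}/r^{\nu k}$ with $r=h^{-1/\nu}\lambda_l^{1/\nu}$ produces $\|\hat\phi(l)\|_{\HS}\leq C_h\exp(-M(h^{-1/\nu}\lambda_l^{1/\nu}))$, and the substitution $L=h^{-1/\nu}$ converts ``for every $h>0$'' into ``for every $L>0$''. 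For the converse I would expand $\|E^k\phi\|_{L^2}^2=\sum_l\lambda_l^{2k}\|\hat\phi(l)\|_{\HS}^2\leq C_L^2\sum_l\lambda_l^{2k}e^{-2M(L\lambda_l^{1/\nu})}$, use the definition of $M$ in the form $e^{-M(r)}\leq M_{2\nu k}/r^{2\nu k}$ with $r=L\lambda_l^{1/\nu}$ to bound $\lambda_l^{2k}e^{-M(L\lambda_l^{1/\nu})}\leq M_{2\nu k}/L^{2\nu k}$, and absorb $M_{2\nu k}\leq AH^{\nu k}M_{\nu k}^2$ via~(M.2). The residual sum $\sum_l e^{-M(L\lambda_l^{1/\nu})}$ is finite by a direct application of~\eqref{EQ:estlambdas} together with the Weyl asymptotics embedded in Proposition~\ref{PROP:dlambdas}. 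The outcome is $\|E^k\phi\|_{L^2}\leq C_L'(\sqrt{H}/L)^{\nu k}M_{\nu k}$, i.e.\ $\|E^k\phi\|_{L^2}\leq C_h h^{\nu k}M_{\nu k}$ with $h=\sqrt{H}/L\to 0$ as $L\to\infty$, which is exactly the Beurling defining estimate.

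For the chain of equivalences I would close the cycle (iv)$\Rightarrow$(iii)$\Rightarrow$(ii)$\Rightarrow$(iv) together with (i)$\Leftrightarrow$(ii). The implication (iii)$\Rightarrow$(iv) is immediate by positivity. The direction (iv)$\Rightarrow$(iii) is essentially a copy of the forward estimate in Lemma~\ref{L:perfect1}: the pointwise factor $\exp(M(L\lambda_l^{1/\nu}))$ from~(iv), multiplied by the polynomial Weyl bound $d_l\lesssim(1+\lambda_l)^{n/\nu}$, is dominated by $\exp(M(L_2\lambda_l^{1/\nu}))$ with $L_2=L\sqrt{A}H$ from~(M.2), and the surplus $\exp(-M(L_2\lambda_l^{1/\nu}))$ provides summability via~\eqref{EQ:estlambdas}. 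For (iii)$\Rightarrow$(ii), part~(a) of the present theorem applied to any $\phi\in\Gamma_{(M_k)}(X)$ gives $\|\hat\phi(l)\|_{\HS}\leq C_L\exp(-M(L\lambda_l^{1/\nu}))$ for \emph{every} $L$; choosing the $L$ supplied by~(iii) and pairing term by term yields absolute convergence of $\sum_l\sum_j|(v_l)_j||\hat\phi(l,j)|$. For (ii)$\Rightarrow$(iv) I would test $v$ against a carefully chosen sequence of functions in $\Gamma_{(M_k)}(X)$ realising, via part~(a) and the perfect-space structure of Theorem~\ref{P:perfect}, Fourier coefficients that saturate the envelope $\exp(-M(L\lambda_l^{1/\nu}))$, forcing the pointwise bound~(iv). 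Finally, (i)$\Leftrightarrow$(ii) comes from identifying continuous linear functionals on the projective-limit Fr\'echet space $\Gamma_{(M_k)}(X)$ with elements of the $\alpha$-dual through the Fourier isomorphism: (ii)$\Rightarrow$(i) is a direct estimate using (iii) and the seminorm system of the Beurling class, while (i)$\Rightarrow$(ii) follows by testing $v$ against the basis functions $e_l^j$, which lie in every seminorm ball.

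The main obstacle is the quantifier book-keeping across~(M.2): each application loses a factor $\sqrt{A}H$, which in the Roumieu case can be harmlessly hidden inside ``some larger $L$'', but in the Beurling case must be propagated through the universal ``for every $h>0$''. This is precisely why (M.3) is strengthened to~(M.3$'$): the latter furnishes the polynomial absorption $d_l\lesssim(1+\lambda_l)^{n/\nu}$ against $\exp(-M(L\lambda_l^{1/\nu}))$ \emph{uniformly} in $L$. A secondary delicate point is the construction, in the step (ii)$\Rightarrow$(iv), of test functions whose Fourier coefficients track the Beurling envelope uniformly in $L$; this is where the Beurling analogue of the perfect-space argument (an immediate modification of Theorem~\ref{P:perfect} with the Roumieu quantifiers replaced by Beurling ones) enters implicitly.
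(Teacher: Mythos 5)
Your outline is essentially correct, but note that the paper itself offers \emph{no} proof of Theorem \ref{THM: Beurling 1}: it simply asserts the statement as the Beurling counterpart of the Roumieu results of \cite{DaR2}, so there is no argument in the text to compare against. What you propose is exactly the intended adaptation (re-run the Roumieu proofs with ``there exists $h$'' replaced by ``for every $h$'', reuse \eqref{EQ:estlambdas}, Proposition \ref{PROP:dlambdas} and the $(M.2)$-absorption $2M(L r)\leq M(L\sqrt{A}H r)+C$ as in Lemma \ref{L:perfect1}), and the quantifier book-keeping you describe does go through. Two small points to tidy up: in the forward direction of part (a) the substitution should be $L=h^{-1}$ rather than $h^{-1/\nu}$ (harmless, since both sweep out all of $(0,\infty)$); and in (ii)$\Rightarrow$(iv) the saturating test function must be built with $L=p\to\infty$ (coefficients $e^{-M(p\lambda_{l_p}^{1/\nu})}\overline{v_{l_p}}/\|v_{l_p}\|_{\HS}$ along a strictly increasing sequence $l_p$), since the Beurling membership criterion requires decay below \emph{every} envelope $e^{-M(L\lambda_l^{1/\nu})}$ --- your sketch leaves this direction of the quantifier implicit. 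Also, the superpolynomial decay $\lambda_l^{q}e^{-\delta M(L\lambda_l^{1/\nu})}\leq C$ already follows from the definition of $M$ as a supremum; the role of (M.$3'$) is rather to ensure $M(r)<\infty$ for all $r$ and the non-triviality of the Beurling class, not the polynomial absorption you attribute to it.
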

Again we note that given this characterisation of $\alpha$-duals, one can prove that they are perfect, in a way similar to the proof of Theorem \ref{P:perfect}, namely,
that 
\begin{equation}
\left[\Gamma_{(M_k)}(X)\right]=\left(\left[\Gamma_{(M_k)}(X)\right]^{\wedge}\right)^{\wedge}.
\end{equation}
Finally we can state the counterpart of the adjointness Theorem \ref{THM:adj}. 

\begin{thm}[Adjointness Theorem Beurling Case]
\label{THM:AdjB}
Let $\{M_k\}$ and $\{N_k\}$ satisfy conditions (M.{0})--(M.$3'$). A linear mapping $f:\Gamma_{(M_k)}(X)\rightarrow \Gamma_{(N_k)}(X)$ is sequential  if and only if  $f$ is represented by an infinite tensor $(f_{kjli}), $ ~ $k,j\in \mathbb{N}_{0},$ $1\leq l\leq d_{j}$ and $1\leq i\leq d_k$ such that for any $u\in\Gamma_{(M_k)}(X)$ and $v\in\hat{\Gamma_{(N_k)}}(X)$ we have
\begin{equation} \label{EQ:f11}
\sum_{j=0}^{\infty}\sum_{l=1}^{d_j}|f_{kjli}||\hat{u}(j,l)|<\infty, ~~\textrm{for ~all}~k\in\mathbb{N}_{0}, ~i=1,2,...,d_k, 
\end{equation} and 
\begin{equation}\label{EQ:f21}
\sum_{k=0}^{\infty}\sum_{i=1}^{d_k}\left|(v_k)_i\right|\left|{\left(\sum_{j=0}^{\infty}f_{kj}\hat{u}(j)\right)_{i}}\right|<\infty.
\end{equation}
Furthermore, the adjoint mapping $\hat{f}:\hat{\Gamma_{(N_k)}}(X)\rightarrow \hat{\Gamma_{(M_k)}}(X)$ defined by the formula $\hat{f}(v)=v\circ f$ is also sequential, and the transposed  matrix ${(f_{kj})}^{t}$ represents $\hat{f}$, with $f$ and $\hat f$ related by $\langle f(u),v\rangle=\langle u,\hat f (v)\rangle.$
\end{thm}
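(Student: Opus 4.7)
The plan is to transcribe the proof of Theorem \ref{THM:adj} line for line, with the Roumieu characterisations of $\Gamma_{\{M_k\}}(X)$ and $[\Gamma_{\{M_k\}}(X)]^\wedge$ replaced by the Beurling characterisations from Theorem \ref{THM: Beurling 1}. The definitions of sequential mapping and of $\alpha$-dual are purely algebraic conditions on absolute convergence of bilinear pairings, and the Roumieu proof invokes the Fourier-coefficient characterisations only to confirm such convergence, so swapping the direction of the quantifier on $L$ costs nothing in the formal structure of the argument.

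The first technical step is the Beurling analog of Lemma \ref{L:L1}: for $u\in \Gamma_{(M_k)}(X)$ and $v\in [\Gamma_{(N_k)}(X)]^\wedge$, the Fourier truncations $u^n$ have finite support and hence lie in $\Gamma_{(M_k)}(X)$, while $\langle u-u^n, w\rangle \to 0$ for every $w\in[\Gamma_{(M_k)}(X)]^\wedge$ is immediate from the absolute convergence defining the $\alpha$-dual. Applying this with $w=v\circ f$ and rerunning the rearrangement of \eqref{EQ:long} yields the desired partial-sum identity. This step quietly invokes the Beurling-case perfectness asserted above Theorem \ref{THM:AdjB}, which guarantees normality so that the same pointwise-phase reduction to absolute values used in Lemma \ref{L:L1} goes through.

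In the forward direction, given a tensor $(f_{kjli})$ satisfying \eqref{EQ:f11} and \eqref{EQ:f21}, the rearrangement \eqref{EQ:long} reads
\[
\langle f(u), v\rangle = \sum_{j=0}^{\infty}\sum_{l=1}^{d_j}\hat u(j,l) \sum_{k=0}^{\infty}\sum_{i=1}^{d_k} f_{kjli}(v_k)_i = \langle u, v\circ f\rangle,
\]
with $(v\circ f)_{j,l} := \sum_{k,i}(v_k)_i f_{kjli}$. The Beurling analog of Lemma \ref{L:L1} justifies the rearrangement and shows $v\circ f \in [\Gamma_{(M_k)}(X)]^\wedge$ for every $v\in[\Gamma_{(N_k)}(X)]^\wedge$, so $f$ is sequential and $\hat f(v) = v\circ f$ is represented by the transposed tensor, giving also $\langle f(u),v\rangle=\langle u,\hat f(v)\rangle$. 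In the converse direction, for each $k\in\N_0$ and $1\leq i\leq d_k$ I define the finite-support indicator sequence $u^{ki}_j(l)=\delta_{jk}\delta_{li}$, which lies in $[\Gamma_{(N_k)}(X)]^\wedge$ trivially. Sequentiality of $f$ then gives $u^{ki}\circ f\in[\Gamma_{(M_k)}(X)]^\wedge$, and I set $f_{kjli} := (u^{ki}\circ f)_j(l)$. The identity $\widehat{f(\phi)}(k,i) = \sum_{j,l} f_{kjli}\hat\phi(j,l)$ is verified exactly as in \eqref{EQ: 4.27}--\eqref{EQ: 4.28}; then \eqref{EQ:f11} is precisely $\alpha$-dual membership of $u^{ki}\circ f$, and \eqref{EQ:f21} is $\alpha$-dual membership of $v$ tested against $f(\phi)\in\Gamma_{(N_k)}(X)$.

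The main obstacle is really only the Beurling analog of Lemma \ref{L:L1}, since its Roumieu proof used implicitly that both $\Gamma_{\{M_k\}}(X)$ and its $\alpha$-dual are normal so that one can replace Fourier coefficients by their absolute values. This normality is equivalent to perfectness, which in the Beurling setting is the displayed identity immediately preceding Theorem \ref{THM:AdjB}; its proof in turn replicates that of Theorem \ref{P:perfect} via the Lemma \ref{L:perfect1}-type echelon-space computation, now using the ``for every $L$'' characterisation of $\Gamma_{(M_k)}(X)$ and the ``there exists $L$'' characterisation of its $\alpha$-dual from Theorem \ref{THM: Beurling 1}. Once this is in place, every remaining step of the adjointness proof is a mechanical transcription.
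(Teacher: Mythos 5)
Your proposal is correct and matches the paper exactly: the paper gives no separate argument for Theorem \ref{THM:AdjB}, stating only that the proof is similar to that of Theorem \ref{THM:adj} and omitting the repetition. Your transcription plan, including the observation that the quantifier swap on $L$ in Theorem \ref{THM: Beurling 1} and the Beurling-case perfectness are the only points requiring attention, is precisely the intended adaptation.
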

The proof of Theorem \ref{THM:AdjB} is similar to the corresponding proof in Theorem \ref{THM:adj} for the spaces $\Gamma_{\{M_{k}\}}(X),$ so we omit the repetition.

\end{document}